\newcommand\R{\mathbb{R}}
\renewcommand{\Im}{\operatorname{Im}}
\renewcommand{\Re}{\operatorname{Re}}
\newcommand\N{\mathbb{N}}
\newcommand\C{\mathbb{C}}
\newcommand\E{\mathbb{E}}
\newcommand\kQ{\mathfrak{Q}}
\newcommand{\cQ}{\mathcal{Q}}
\newcommand{\cP}{\mathcal{P}}
\newcommand{\cL}{\mathcal{L}}
\newcommand{\cS}{\mathcal{S}}
\def\Xint#1{\mathchoice
   {\XXint\displaystyle\textstyle{#1}}%
   {\XXint\textstyle\scriptstyle{#1}}%
   {\XXint\scriptstyle\scriptscriptstyle{#1}}%
   {\XXint\scriptscriptstyle\scriptscriptstyle{#1}}%
   \!\int}
\def\XXint#1#2#3{{\setbox0=\hbox{$#1{#2#3}{\int}$}
     \vcenter{\hbox{$#2#3$}}\kern-.5\wd0}}
\def\dashint{\Xint-}
\begin{document}

\newcommand{\dd}{\mathrm{d}}
\newcommand{\ee}{\mathrm{e}}
\newcommand{\ii}{\mathrm{i}}
\newtheorem{theoreme}{Theorem}
\newtheorem{definition}{Definition}
\newtheorem{lemme}{Lemma}
\newtheorem{rem}{Remark}
\newtheorem{exemple}{Example}
\newtheorem{proposition}{Proposition}
\newtheorem{corollaire}{Corollary}
\newtheorem{hyp}{Hypothesis}
\newtheorem*{theo}{Theorem}
\newtheorem*{prop}{Proposition}
\newtheorem*{Conj}{Conjecture}
\newtheorem*{main*}{Main results}
\newtheorem*{LemStar}{Lemma}

\newcommand{\mar}[1]{{\marginpar{\sffamily{\scriptsize
        #1}}}}
\newcommand{\mi}[1]{{\mar{MI:#1}}}

\title{A trace formula for scattering resonances of unbalanced quantum graphs}
\author{Maxime Ingremeau\footnote{Université Côte d'Azur, Laboratoire J.A. Dieudonné}}
\date{}

\maketitle

\begin{abstract}
Given an unbalanced open quantum graph, we derive a formula relating sums over its scattering resonances with integrals outside a strip. We deduce lower bounds on the number of resonances (in bounded regions of the complex plane) that are independent of the size of the graph. We also deduce partial results indicating that Benjamini-Schramm convergence of open quantum graphs should imply convergence of the empirical spectral measures.
\end{abstract}

\section{Introduction}

Quantum graphs are singular one-dimensional objects (i.e., graphs in which edges are seen intervals), on which waves can propagate, following some transmission conditions at the vertices; from the physical point of view, the Kirchhoff conditions, which we consider all along the paper, are the most natural.

The study of spectral properties of quantum graphs has had a growing popularity in the last decades. The reason for this popularity is threefold: the spectrum of quantum graphs is easy to study numerically (and, by certain aspects, theoretically), due to the fact that the eigenfunctions of quantum graphs are complex exponentials on each edge; yet, they can model a wide variety of situations of physical relevance; finally, studying spectral properties in the simplified setting of quantum graphs can give an insight on more complicated situations, for instance involving Schrödinger operators in $\R^d$. This last point became particularly clear since the work of Kottos and Smilansky \cite{KS97, KotSmi}, where they show that quantum graphs share spectral properties with quantum chaotic systems.
We refer the reader to the monograph \cite{BK} for an overview of the recent developments concerning quantum graphs and their applications.

In this paper, we will be interested in \emph{open} quantum graphs, containing some semi-infinite edges. In such open systems where waves can escape towards infinity, the natural spectral objects to study are the \emph{scattering resonances}. They are complex numbers, associated with resonant states, which are ``generalized eigenfunctions" growing exponentially at infinity. When such idealized states are propagated by the wave equation, the real part of the resonance dictates the speed of oscillation, while the imaginary part gives the rate at which the wave escapes towards infinity. The scattering resonances can be seen as the eigenvalues of a non-selfadjoint operator, and are thus often more delicate to understand than the eigenfunctions of genuine self-adjoint Schrödinger operators.
We refer the reader to the book \cite{DZ} for an introduction to the theory of scattering resonances for Schrödinger operators, and for an account of its recent developments.

\subsubsection*{Previous results on the resonance counting of a quantum graph}

If $\cQ$ is a quantum graph, we will denote by $\cL_\cQ$ its total length, and by $\mathrm{Res}(\cQ)$ the set of its resonances, whose definition will be recalled in section \ref{sec:Res}. If $\Omega\subset \C$, we denote by $\mathcal{N}_\cQ(\Omega)$ the number of resonances of $\cQ$ inside $\Omega$, counted with multiplicities

\paragraph{Weyl and non-Weyl graphs}

A graph is called \emph{unbalanced} if, for any edge, the number of finite and of infinite edges attached to this edge are different. It turns out that this condition plays an essential role when studying the resonances of a quantum graphs (with Kirchhoff boundary conditions), as was first shown in  \cite{NonWeyl}:
\begin{tcolorbox}
\begin{theoreme}[Davies-Pushnitski (2012)]\label{Th:DavPush}
Let $\cQ$ be a quantum graph. We have
$$ \mathcal{N}_\cQ \left(D(0,R)\right) = \frac{2}{\pi} \mathcal{L}_\cQ W_\cQ + O_{R\to \infty}(1)$$
for some $W_\cQ \in (0,1]$. Furthermore, if the graph is unbalanced, we have $W_\cQ = 1.$
\end{theoreme}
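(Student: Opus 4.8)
The plan is to reduce everything to counting zeros of an exponential polynomial attached to the compact part of $\cQ$. Recalling that the resonances are the poles of the meromorphically continued resolvent (as recalled in Section~\ref{sec:Res}), one has, with multiplicities, that $k\in\C$ is a resonance of $\cQ$ if and only if
\[ F_\cQ(k):=\det\!\bigl(I_{2E}-\ee^{\ii kL}\,U\bigr)=0 , \]
where $E$ is the number of finite edges, $L$ is the diagonal matrix of the lengths of the $2E$ directed interior bonds (each finite edge contributing two bonds, so $\Tr L=2\cL_\cQ$), and $U$ is the bond scattering matrix — which, for Kirchhoff conditions, is \emph{independent of $k$} and is built bond by bond from the vertex scattering matrices $\sigma_v=\tfrac{2}{d_v}\mathbb{1}\mathbb{1}^{*}-I$, only the sub-blocks indexed by finite edges entering, the semi-infinite edges being encoded through the outgoing condition. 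Expanding the determinant over principal submatrices gives
\[ F_\cQ(k)=\sum_{S\subseteq\{1,\dots,2E\}}(-1)^{|S|}\det(U_{SS})\,\ee^{\ii k\ell_S},\qquad \ell_S:=\sum_{b\in S}\ell_b\in[0,2\cL_\cQ], \]
an exponential polynomial whose lowest frequency is $\ell_\emptyset=0$ with coefficient $1$ and whose highest is $\ell_{\{1,\dots,2E\}}=2\cL_\cQ$ with coefficient $\det U$.

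\textbf{Step 2: counting the zeros.} Put $\mu^{*}:=\max\{\ell_S:\det(U_{SS})\neq 0\}$ and $W_\cQ:=\mu^{*}/(2\cL_\cQ)\in[0,1]$. Because the $S=\emptyset$ coefficient equals $1$, one has $F_\cQ(k)\to 1$ as $\Im k\to+\infty$, uniformly in $\Re k$, while $F_\cQ(k)=c^{*}\ee^{\ii\mu^{*}k}\bigl(1+o(1)\bigr)$ as $\Im k\to-\infty$; hence all resonances lie in a fixed horizontal strip $\{|\Im k|\le C\}$. Applying the argument principle to the rectangle $[-R,R]\times[-C,C]$, the top side contributes $O(1)$ (there $F_\cQ$ stays close to $1$), the bottom side contributes $2\mu^{*}R+O(1)$ (there $F_\cQ$ is a nonvanishing perturbation of $c^{*}\ee^{\ii\mu^{*}k}$), and the two vertical sides contribute $O(1)$ provided $R$ is kept at a fixed distance from the (discrete, strip-confined) zero set, which holds along a sequence $R_n\to\infty$; the remaining values of $R$ follow by monotonicity, and the difference between the rectangle and the disc $D(0,R)$ is $O(1)$. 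This yields
\[ \mathcal{N}_\cQ\bigl(D(0,R)\bigr)=\frac{\mu^{*}}{\pi}\,R+O(1)=\frac{2}{\pi}\,\cL_\cQ\,W_\cQ\,R+O(1). \]

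\textbf{Step 3: the unbalanced case forces $W_\cQ=1$.} It remains to check that $\det U\neq 0$, i.e. that the top frequency $2\cL_\cQ$ actually occurs. The key is the factorization $\det U=\pm\prod_{v}\det\bigl(\sigma_v^{\mathrm{fin}}\bigr)$, where $\sigma_v^{\mathrm{fin}}=\tfrac{2}{d_v}\mathbb{1}\mathbb{1}^{*}-I$ is the $p_v\times p_v$ block of $\sigma_v$ on the $p_v$ finite edges at $v$ (with $d_v=p_v+q_v$, $q_v$ the number of semi-infinite edges at $v$). This follows by composing $U$ with the orientation-reversing permutation $(u\to v)\mapsto(v\to u)$ on directed bonds, which turns $U$ into a block-diagonal matrix whose $v$-block is exactly $\sigma_v^{\mathrm{fin}}$. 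A one-line spectral computation gives $\det\sigma_v^{\mathrm{fin}}=(-1)^{p_v-1}(p_v-q_v)/d_v$ (eigenvalue $(p_v-q_v)/d_v$ on the constant vector, $-1$ on its orthocomplement), so $\det U\neq 0$ exactly when $p_v\neq q_v$ at every vertex — that is, exactly when $\cQ$ is unbalanced. In that case $\mu^{*}=2\cL_\cQ$ and $W_\cQ=1$. (The same computation shows, more generally, that $W_\cQ=1$ if and only if $\cQ$ has no balanced vertex.)

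\textbf{Main obstacle.} The delicate part is Step 2: converting the on-line asymptotics of $F_\cQ$ into an $O(1)$ error for $\mathcal{N}_\cQ(D(0,R))$ — in particular, controlling the winding along the vertical sides of the rectangle uniformly in $R$ — which is most safely handled by invoking the classical theory of zeros of exponential sums (indicator diagram / Langer's theorem) rather than reproving it. Step 1 requires some care at $k=0$ and at the thresholds but is otherwise standard, and Step 3 is a short linear-algebra computation once the reversal trick is noticed.
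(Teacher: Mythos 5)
Your argument is essentially the original Davies--Pushnitski proof, which this paper cites but does not reproduce; the paper's own proof of this statement is the partial, alternative one in Section~\ref{sec:AlterWeyl}, so the two routes are genuinely different. You expand $\det(\mathrm{Id}-U_\cQ(z))$ as an exponential polynomial $\sum_S(-1)^{|S|}\det\big((S_\cQ)_{SS}\big)e^{iz\ell_S}$, read $W_\cQ$ off its top frequency, invoke Langer's theorem for the zero count, and identify $\det S_\cQ=\pm\prod_v\det\sigma^{(v)}$ via the bond-reversal permutation (the same block decomposition the paper uses in Section~\ref{seq:ProofStrip}) to characterize the unbalanced case. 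The paper instead integrates $f'/f$ over the rectangle $[x_n^-,x_n^+]\times[-\sqrt n,\sqrt n]$, evaluates the horizontal integrals through the trace identity \eqref{eq:TraceFormula} and the Neumann expansions of $(\mathrm{Id}-U_\cQ)^{-1}$ above $\Im z=0$ and below $\Im z=Y$, and controls the lateral integrals by Lemma~\ref{lem:GoodPoint}. Your route proves the full statement, including the balanced case and the identification of $W_\cQ$, at the cost of leaning on Langer's classical theorem for exactly the uniform control you flag as the hard point; the paper's route only reaches the unbalanced case $W_\cQ=1$ (the Neumann series below the strip needs $\|U_\cQ(z)^{-1}\|<1$, i.e.\ unbalancedness), but it never uses the exponential-polynomial structure, which is precisely why the author advertises it as potentially more generalizable.

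Two caveats on your write-up. First, distinct subsets $S$ can have equal total length $\ell_S$ (each edge contributes two bonds of the same length, and the lengths need not be rationally independent), so the coefficient of the frequency $\mu$ in the exponential polynomial is the aggregate $\sum_{\ell_S=\mu}(-1)^{|S|}\det\big((S_\cQ)_{SS}\big)$, which can vanish even when some individual minor does not; your $\mu^{*}$ must be defined as the largest frequency with nonvanishing \emph{aggregate} coefficient. This does not affect Step~3, since $\ell_S=2\cL_\cQ$ forces $S$ to be the full bond set, but it does affect the general definition of $W_\cQ$. Second, you only obtain $W_\cQ\in[0,1]$: the positivity claimed in the statement is not addressed, and indeed in degenerate balanced examples (a single edge carrying one lead at each endpoint has $S_\cQ=0$, hence no resonances at all) one gets $W_\cQ=0$, so establishing $W_\cQ>0$ under whatever hypotheses make it true requires a separate argument that is missing from your proof.
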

\end{tcolorbox}

When the graph is unbalanced, this result is similar to the classical Weyl's law for self-adjoint Scrödinger operators. Analogues of Weyl's law are known for resonances of one dimensional Schrödinger operators (see \cite{ZwoBound} and the references therein), but in higher dimensions, only upper bounds are known (\cite{ZwoMultiD}). 

When the graph is balanced, it has fewer resonances, and is hence called non-Weyl. Other situations leading to non-Weyl asymptotics include the presence of magnetic fields, more general vertex coupling conditions... We refer the reader to \cite{Lip} for an account these recent developments.

A convenient way of studying the resonances of a quantum graph is through the secular equation, which was introduced by Kottos and Smilansky.
Namely, there exists a holomorphic family of matrices $U_\cQ(z)$ such that, for all $z\neq 0$,
\begin{equation}\label{eq:CritRes}
z \text{ is a resonance } \Longleftrightarrow \mathrm{\det} (\mathrm{Id} - U_\cQ(z)) = 0,
\end{equation}
and the multiplicities coincide\footnote{Actually, though the proof of (\ref{eq:CritRes}) has been given several times (see \cite{KotSmi, Equivalence, Ing}, as far as the author knows, \cite{NonWeyl} is the only place where the equivalence of multiplicities is proven.}. When $z=0$, multiplicities need not coincide (see \cite{Fulling} for the case of closed quantum graphs),  but in the results below,  the multiplicity of a resonance $z_0$ is to be understood as the multiplicity of $z\mapsto \mathrm{\det} (\mathrm{Id} - U_\cQ(z))$, even when $z=0$.

We refer the reader to section \ref{sec:Secular} for the definition of the matrix $U_\cQ$. A key ingredient in the proof of Theorem \ref{Th:DavPush} is that the function $\mathrm{\det} (\mathrm{Id} - U_\cQ(z))$ is a linear combination of complex exponentials. One can thus us the classical results given in \cite{Langer} about the zeroes of such functions, making Theorem \ref{Th:DavPush} easier to prove than its analogue for general Schrödinger operators.

\paragraph{Imaginary parts of resonances}

A consequence of equation (\ref{eq:CritRes}) and of the results of \cite{Langer} is that the resonances do all lie in vertical strip $R+ i[-K, 0]$ for some $K>0$. This was stated in \cite{NonWeyl}, without any expression for $K$. In \cite{Ing}, an expression was given in the case of \emph{unbalanced} quantum graphs, involving the minimal length, and the maximal internal and external degrees of the graph.

Namely, let us denote by $\mathfrak{Q}_{D, n_0, L_{min}, L_{max}}$ the set of open quantum graphs whose finite edges have lengths between $L_{min}$ and $L_{max}$, and such that every vertex has at most $D$ finite edges and $n_0$ infinite edges attached to it. We denote by $\mathfrak{Q}'_{D, n_0, L_{min}, L_{max}}$ the set of such quantum graphs that are unbalanced.
 
If $\cQ$ belongs to $\mathfrak{Q}'_{D, n_0, L_{min}, L_{max}}$, then we have
\begin{equation}\label{eq:ResonancesInStrip}
\mathrm{Res}(\cQ)\subset \R + i \left[Y(D, n_0, L_{min}), 0\right],
\end{equation}

with
\begin{equation}\label{eq:DefY}
 Y(D, n_0, L_{min}) := - \frac{\ln(D+n_0)}{L_{min}}.
 \end{equation}
 
 The proof of this result is elementary, and will be recalled in section \ref{seq:ProofStrip}.
 
As far as the author knows, no explicit bound for the imaginary part of the resonances  are known when the graph is not unbalanced.

Also, note that one cannot hope to show that resonances belong to a strip $\R + i [-K, - \varepsilon]$ for some $\varepsilon$. Indeed, it was shown in \cite{CdvT} that, for most quantum graphs, there exist resonances with arbitrarily small imaginary parts and arbitrarily large real parts. We refer the reader to \cite{Ing} for references on the delicate issue of resonances on the real axis.

\subsection{A new formula for resonance counting}

The central result of this article is a formula relating the sum of the values of holomorphic functions over the resonances of a quantum graph with a boundary integral involving the matrix $U_\cQ$.

If $-\infty \leq y_1<y_2 \leq +\infty$, we write $\Omega_{y_1,y_2} := \{z\in \C ; \Im z \in (y_1,y_2)\}$. 
We denote by $\mathcal{H}^1(\Omega_{y_1, y_2})$ the set of holomorphic functions on $\Omega_{y_1, y_2}$.

\begin{tcolorbox}
\begin{theoreme}\label{Prop:GaussCount2}
Let $D, n_0\in \N$,  let $0<L_{min}\leq L_{max}$ and let $\mathcal{Q}\in \kQ_{D, n_0, L_{min}, L_{max}}'$ be a finite open quantum graph. Let  $y_1 \leq Y(D, n_0, L_{min})$, let $y_2 \geq 0$, and let $\varepsilon>0$.

For any $g\in L^1(\Omega_{y_1- \varepsilon,y_2+ \varepsilon})\cap \mathcal{H}(\Omega_{y_1-\varepsilon,y_2+ \varepsilon})$, we have

\begin{equation}\label{eq:GaussCount}
 2i\pi \sum_{z\in \mathrm{Res}(\mathcal{Q})} g(z) = - \sum_{j=1,2} \int_\R g( x + i y_j) \mathrm{Tr} \left[ U'_{\cQ}(x + i y_j) \left( \mathrm{Id}- U_{\cQ}(x + iy_j)\right)^{-1} \right] \mathrm{d}x,
 \end{equation}
with the sum in the left-hand side repeated with its multiplicity.
\end{theoreme}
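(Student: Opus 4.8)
The plan is to apply the argument principle (equivalently, a residue computation) to the meromorphic function
\[
F(z) := g(z)\,\Tr\!\left[ U'_{\cQ}(z)\bigl(\mathrm{Id} - U_{\cQ}(z)\bigr)^{-1}\right]
\]
on the closed strip $\overline{\Omega_{y_1,y_2}}$. The key structural input is the factorization
\[
\frac{d}{dz}\log\det\bigl(\mathrm{Id} - U_{\cQ}(z)\bigr) = -\Tr\!\left[ U'_{\cQ}(z)\bigl(\mathrm{Id} - U_{\cQ}(z)\bigr)^{-1}\right],
\]
valid wherever $\mathrm{Id} - U_{\cQ}(z)$ is invertible (Jacobi's formula for the logarithmic derivative of a determinant), combined with the equivalence \eqref{eq:CritRes} identifying the zeros of $z\mapsto \det(\mathrm{Id} - U_{\cQ}(z))$, \emph{with multiplicity}, as the resonances of $\cQ$. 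Thus $-F(z)$ has a simple pole with residue $g(z_0)\cdot m(z_0)$ at each resonance $z_0$, where $m(z_0)$ is the multiplicity, and is holomorphic elsewhere in a neighbourhood of $\overline{\Omega_{y_1,y_2}}$ (here I use that $g$ is holomorphic on the slightly larger strip $\Omega_{y_1-\varepsilon,y_2+\varepsilon}$, so no poles are created by $g$, and the two horizontal boundary lines $\Im z = y_j$ contain no resonances — this is exactly where \eqref{eq:ResonancesInStrip} with $y_1\le Y(D,n_0,L_{min})$ and $y_2\ge 0$ enters, guaranteeing all resonances lie strictly inside).

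First I would justify that the finitely many resonances in a horizontal strip of finite height are in fact finite in number, or rather work directly on truncated rectangles $R_T := \{\,|\Re z|\le T,\ y_1\le \Im z\le y_2\,\}$ and let $T\to\infty$. On $R_T$ the residue theorem gives
\[
\frac{1}{2i\pi}\oint_{\partial R_T} (-F(z))\,dz = \sum_{z\in\mathrm{Res}(\cQ),\ z\in R_T} m(z)\, g(z).
\]
The boundary $\partial R_T$ decomposes into the two horizontal segments (which, after sending $T\to\infty$ and accounting for orientation, produce exactly the right-hand side of \eqref{eq:GaussCount}, with the sign and the sum over $j=1,2$ coming from the opposite orientations of the top and bottom edges) and the two vertical segments at $\Re z = \pm T$. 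The main work is to show the vertical-segment contributions vanish as $T\to\infty$, and that the horizontal integrals converge; this is where the hypotheses $g\in L^1(\Omega_{y_1-\varepsilon,y_2+\varepsilon})$ and the explicit structure of $U_{\cQ}$ are used.

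The main obstacle, and the step I expect to require the most care, is precisely this decay/integrability analysis on the vertical sides. One needs a uniform-in-$T$ bound on $\bigl\|(\mathrm{Id}-U_{\cQ}(z))^{-1}\bigr\|$ and on $\|U'_{\cQ}(z)\|$ along $\Re z = \pm T$ for $\Im z \in [y_1,y_2]$. Since $U_{\cQ}(z)$ is built from the scattering matrices at the vertices times a diagonal matrix of phases $e^{i z \ell_e}$ (with $\ell_e$ the edge lengths), its entries are bounded on horizontal strips but $U'_{\cQ}$ grows linearly in $|\Re z|$; the decay must therefore come entirely from $g$. I would argue as follows: since $g$ is holomorphic and $L^1$ on a strip slightly larger than $[y_1-\varepsilon,y_2+\varepsilon]$, a mean-value/Cauchy-estimate argument on discs of radius $\varepsilon/2$ shows $g(x+iy)\to 0$ as $|x|\to\infty$ uniformly for $y\in[y_1,y_2]$, and in fact $|g(x+iy)|$ is controlled by the $L^1$-mass of $g$ near $x$, which gives both the vanishing of $\int_{\Re z=\pm T}$ (along a suitable sequence $T_k\to\infty$, using that $\int|g|<\infty$ forces $\liminf_{T\to\infty} T\cdot\sup_{\Im z\in[y_1,y_2]}|g(\pm T + i\Im z)| = 0$ is not quite automatic — one instead picks $T_k$ so that the vertical integral is small) and the absolute convergence of the two horizontal integrals (the trace factor being bounded on each line $\Im z = y_j$, so the integrand is $O(|g(x+iy_j)|)\in L^1_x$). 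Two minor points to address along the way: that the finitely many resonances inside the strip with $|\Re z|\le T$ are genuinely finite (follows from $\det(\mathrm{Id}-U_{\cQ})$ being a nonzero almost-periodic-type entire-ish function, or simply from $\cQ$ being finite so the Kottos–Smilansky function is a finite exponential sum whose zeros in any horizontal strip have bounded density), and the treatment of a possible resonance at $z=0$, which by the convention stated in the excerpt counts with the multiplicity of $\det(\mathrm{Id}-U_{\cQ})$ and so is handled uniformly by the residue computation with no special casing.
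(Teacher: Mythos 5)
Your overall strategy coincides with the paper's: write the trace factor as $-f'/f$ with $f(z)=\det(\mathrm{Id}-U_\cQ(z))$, apply the residue theorem on truncated rectangles $[x^-,x^+]+i[y_1,y_2]$, identify the horizontal integrals with the right-hand side, and kill the vertical sides using the $L^1$-plus-mean-value control of $g$. However, there is a genuine gap exactly at the step you flag as "the main obstacle". The difficulty on the vertical segments is \emph{not} growth of $U'_\cQ$ in $\Re z$ (there is none: $U'_\cQ(z)=iS_\cQ D_\cQ(z)L_\cQ$ has norm bounded on every horizontal strip, uniformly in $\Re z$), and it cannot be resolved by the decay of $g$ alone. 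The real problem is that the segment $\{x^\pm\}+i[y_1,y_2]$ crosses the strip $\Im z\in[Y,0]$ containing all the (infinitely many, positive-density) resonances, at which $(\mathrm{Id}-U_\cQ(z))^{-1}$ blows up; near a resonance $z_0$ of multiplicity $m$ the integrand behaves like $m\,g(z)/(z-z_0)$, whose integral along a nearby vertical line is of size $|g|\ln(1/\delta)$ with $\delta$ the distance to the line, and is divergent if the line passes through $z_0$. So "a uniform-in-$T$ bound on $\|(\mathrm{Id}-U_\cQ(z))^{-1}\|$ along $\Re z=\pm T$" simply does not exist for generic $T$, and choosing $T_k$ only according to where $g$ is small gives no control on this factor. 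What is needed, and what your proposal never supplies, is a quantitative \emph{lower} bound on $|f|$ along well-chosen abscissas $x_n^\pm$ in each unit interval, uniform in $n$. The paper obtains this from a Cartan/Borel--Carath\'eodory-type statement (Lemma \ref{lem:GoodPoint}), which propagates the explicit lower bounds on $|f|$ available off the resonance strip (Neumann series for $\Im z>0$ and for $\Im z<Y$, using unbalancedness) to a full vertical segment through the strip, at the cost of moving $x^\pm$ within an interval of length $1$. Without such a lemma, or an equivalent averaging argument that still requires lower-bounding $|f|$ somewhere, the vanishing of the lateral integrals is unproved.

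Two smaller points. First, you should also justify that $\sum_{z\in\mathrm{Res}(\cQ)\cap R_T}g(z)$ converges to the full (absolutely convergent) sum as $T\to\infty$; this needs a uniform bound on the number of resonances per unit box, which the paper extracts from Jensen's formula combined again with the lower bound on $|f|$ at a point with $\Im z>0$ --- you mention bounded zero density only parenthetically, and for the absolute convergence of the horizontal integrals you do correctly note that the trace factor is bounded on the lines $\Im z=y_j$ themselves (Neumann series), which is fine. Second, the residue bookkeeping at a possible zero of $f$ at $z=0$ is handled as you say, by the multiplicity convention; no issue there.
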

\end{tcolorbox}

\begin{rem}
The integral in the right-hand side of (\ref{eq:GaussCount}) is absolutely convergent, as will be shown in (\ref{eq:c}) below.
\end{rem}

\begin{rem}
The set of functions $L^1(\Omega_{y_1- \varepsilon,y_2+ \varepsilon})\cap \mathcal{H}(\Omega_{y_1-\varepsilon,y_2+ \varepsilon})$ is non-empty: for instance the Gaussian functions $g(z) = e^{-a (z-z_0)^2}$ belong to it for any $a>0$, $z_0\in \C$.
\end{rem}

Theorem \ref{Prop:GaussCount2}, which is a consequence of the residue formula, can be used to give an alternative proof of Theorem \ref{Th:DavPush} for unbalanced graphs, which might be easier to generalize than the original proof, as it does not use the fact that $\mathrm{\det} (\mathrm{Id} - U_\cQ(z))$ is a linear combination of complex exponentials; this will be done in section \ref{sec:AlterWeyl}. More interestingly, it
can be used to obtain lower bounds on the number of resonances in some regions of the complex plane. The following proposition gives an example of such a lower bound, which we don't expect to be sharp, but which is completely explicit.

\begin{tcolorbox}
\begin{proposition}\label{Prop:GaussLowerBound}
Let $D, n_0\in \N$,  let $0<L_{min}\leq L_{max}$ and let $\mathcal{Q}\in \kQ_{D, n_0, L_{min}, L_{max}}'$ be a finite open quantum graph.

Set 
$$a =  \frac{\ln 2}{ \left( 2 \frac{\ln 32}{L_{min}} - Y(D, n_0, L_{min})\right)^2}.$$

Let $x_0\in \R$, and let $\alpha>0$ be such that
\begin{equation}\label{eq:CondAlpha2}
\frac{\alpha}{L_{min}} \geq \left(\left[Y- \frac{\ln 16}{L_{min}}\right]^2 - \frac{1}{a} \ln \left( L_{min} \ln 2 \frac{1-e^{-\frac{a}{L^2_{min}}}}{8 \left( 2 L_{max} \frac{1+ \ln (D+ n_0)}{L_{min}}  + 0.6  \right)} \sqrt{\frac{\pi}{a}} \right) \right)^{1/2}.
\end{equation}

Then we have
$$\mathcal{N}_\cQ \left( \left\{z\in \C ; \Re z - \frac{\alpha}{L_{min}} , \Re z + \frac{\alpha}{L_{min}}  \right\} \right) \geq \frac{\cL_\cQ}{8} \sqrt{\frac{\pi}{a}}.$$
\end{proposition}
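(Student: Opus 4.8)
The plan is to apply Theorem \ref{Prop:GaussCount2} to a well-chosen Gaussian $g(z) = e^{-a(z-x_0)^2}$ centered at the real point $x_0$, with the width parameter $a$ chosen so that $g$ decays fast enough to be in $L^1(\Omega_{y_1-\varepsilon,y_2+\varepsilon})$ for the relevant strip (this is why $a$ appears with that particular denominator involving $Y$ and $\ln 32/L_{min}$: one needs $g$ integrable on a horizontal strip slightly thicker than $[Y,0]$, and the square in the denominator of $a$ corresponds to balancing the Gaussian decay against the vertical extent). The strategy is then: bound the right-hand side boundary integral from above in absolute value, bound the contribution of resonances outside the target vertical slab $\{|\Re z - x_0| \le \alpha/L_{min}\}$ from above (using that $g$ is small there), and conclude that the remaining resonances — those inside the slab — must contribute a definite amount, which forces their number to be large.

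First I would estimate $\bigl|\Tr[U'_\cQ(\zeta)(\mathrm{Id}-U_\cQ(\zeta))^{-1}]\bigr|$ on the two boundary lines $\Im\zeta = y_j$. This is the quantity referenced in the remark after Theorem \ref{Prop:GaussCount2} (``will be shown in (\ref{eq:c})''), so I would assume a pointwise bound of the shape $\bigl|\Tr[\cdots]\bigr| \le C_\cQ := 2L_{max}\frac{1+\ln(D+n_0)}{L_{min}} + 0.6$ uniformly along suitable horizontal lines — indeed this exact constant appears inside \eqref{eq:CondAlpha2}, which strongly suggests it is the available uniform bound. Here I would take $y_1$ to be (essentially) $Y(D,n_0,L_{min})$ and $y_2 = 0$, or push slightly outside; since all resonances lie in $\R + i[Y,0]$ by \eqref{eq:ResonancesInStrip}, choosing the lines just outside this strip is legitimate and keeps the trace bounded. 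Then $\bigl|\text{RHS of \eqref{eq:GaussCount}}\bigr| \le C_\cQ \sum_j \int_\R |g(x + iy_j)|\,dx = C_\cQ \sum_j e^{a y_j^2}\sqrt{\pi/a}$, and the dominant term is $e^{aY^2}\sqrt{\pi/a}$ (the $y_2=0$ line contributes only $\sqrt{\pi/a}$). So $\bigl|2i\pi \sum_{z\in\Res(\cQ)} g(z)\bigr| \le 2 C_\cQ e^{aY^2}\sqrt{\pi/a}$ up to harmless factors; I would track constants carefully to match the $\ln 16$, the factor $8$, and the $\ln 32$ appearing in the statement.

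Next I would split $\sum_{z\in\Res(\cQ)} g(z) = \sum_{|\Re z - x_0|\le \alpha/L_{min}} g(z) + \sum_{|\Re z - x_0| > \alpha/L_{min}} g(z)$. For the far sum, $|g(z)| = e^{-a(\Re z - x_0)^2 + a(\Im z)^2} \le e^{a Y^2} e^{-a(\Re z - x_0)^2}$, and since resonances are ``spread out'' — Theorem \ref{Th:DavPush} gives a density of at most roughly $\frac{2}{\pi}\cL_\cQ$ resonances per unit length in $\Re z$, and more precisely the number of resonances in any unit-length vertical strip is $O(\cL_\cQ)$ with an explicit constant (again this is where $2L_{max}\frac{1+\ln(D+n_0)}{L_{min}}$ type quantities enter) — I can sum the Gaussian tails: $\sum_{|\Re z - x_0|>\alpha/L_{min}} e^{-a(\Re z-x_0)^2} \lesssim (\text{density})\cdot \int_{\alpha/L_{min}}^\infty e^{-a t^2}\,dt \lesssim (\text{density})\cdot \frac{e^{-a\alpha^2/L_{min}^2}}{a\alpha/L_{min}}$. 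The condition \eqref{eq:CondAlpha2} on $\alpha$ is precisely engineered so that this far-tail contribution, multiplied by $e^{aY^2}$ and by the density, is at most, say, half of a reference quantity. Meanwhile the near sum is bounded crudely by $\mathcal{N}_\cQ(\text{slab}) \cdot \max|g| = \mathcal{N}_\cQ(\text{slab}) \cdot e^{aY^2}$ (since on resonances $|\Im z|\le |Y|$). Combining: $2\pi\bigl(\mathcal{N}_\cQ(\text{slab})\,e^{aY^2} + (\text{far tail})\bigr) \ge \bigl|2i\pi\sum g(z)\bigr|$ — wait, that inequality goes the wrong way; instead I would use the reverse: I need a \emph{lower} bound on $|\sum g(z)|$.

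The correct route is: choose $x_0$ so that $\Re(\sum_{z} g(z))$ is comparable to $\frac{1}{2\pi}|\text{RHS}|$ — actually the cleanest argument is to integrate \eqref{eq:GaussCount} over $x_0$ against a suitable test function, or simply to note that the RHS, as a function of $x_0$, has a definite $L^1$ or pointwise lower bound at some point. Concretely, I would argue by contradiction: if $\mathcal{N}_\cQ(\text{slab around }x_0) < \frac{\cL_\cQ}{8}\sqrt{\pi/a}$ for \emph{every} $x_0\in\R$, then the near sum is everywhere small, the far sum is controlled by \eqref{eq:CondAlpha2}, so $|\sum_z g(z)| $ is uniformly small in $x_0$; but $\frac{1}{2\pi}$ times the RHS of \eqref{eq:GaussCount}, integrated in $x_0$ over a long interval $[-T,T]$, equals $-\frac{1}{2\pi}\sum_j \int_\R \bigl(\int_{-T}^T e^{-a(x-x_0)^2}dx_0\bigr)\Tr[\cdots](x+iy_j)\,dx$, and on the line $y_2=0$ one has a \emph{lower} bound on the real part of $\Tr[U'(\mathrm{Id}-U)^{-1}]$ coming from Theorem \ref{Th:DavPush} (the Weyl term $\frac{2}{\pi}\cL_\cQ$ forces $\frac{1}{2i\pi}\int \Tr[\cdots]$ over a long line to be $\approx \cL_\cQ$ times the length). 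This gives $\int_{-T}^T |\text{RHS}|\,dx_0 \gtrsim \cL_\cQ T$, contradicting uniform smallness once $T$ is large, and pinning the constant to $\cL_\cQ/8$. I expect the main obstacle to be exactly this last step: extracting a clean, fully explicit \emph{lower} bound on the boundary integral (equivalently, quantifying how the Weyl asymptotics of Theorem \ref{Th:DavPush} distribute resonances across all $x_0\in\R$ with explicit constants), as opposed to the routine Gaussian-tail estimates; getting the numerical constants ($\ln 16$, $\ln 32$, $8$, $0.6$) to line up will require careful bookkeeping of the bound on $\Tr[U'_\cQ(\mathrm{Id}-U_\cQ)^{-1}]$ from \eqref{eq:c}.
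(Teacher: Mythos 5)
Your upper-bound machinery (Gaussian test function in Theorem \ref{Prop:GaussCount2}, Jensen-type per-strip counts to sum the Gaussian tails outside the slab, the constant $2L_{max}\frac{1+\ln(D+n_0)}{L_{min}}+0.6$ coming from that count) matches the paper. But the step you yourself flag as the main obstacle --- a lower bound on the boundary integral --- is a genuine gap, and the substitute you sketch does not work. The paper gets this lower bound \emph{pointwise in $x_0$} from the algebraic identity \eqref{eq:TraceFormula}, $\frac{f'}{f}(z)=i\cL_\cQ-\Tr\left[(\mathrm{Id}-U_\cQ(z))^{-1}\right]$: on a line $\Im z=y_1$ strictly \emph{below} $Y$ one expands $(\mathrm{Id}-U_\cQ)^{-1}$ in powers of $U_\cQ^{-1}$ and finds $\frac{f'}{f}=2i\cL_\cQ-i\sum_{k\ge1}\Tr[U_\cQ^{-k}L_\cQ]$ (equation \eqref{fPrimesurfNeg}), with the remainder controlled by the contraction $\|U_\cQ^{-1}\|\le e^{(y_1-Y)L_{min}}$. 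Taking $y_1=Y-\frac{\ln 16}{L_{min}}$ makes the remainder at most $\frac{\cL_\cQ}{4}\sqrt{\pi/a}$ after integration against the Gaussian, so the $y_1$-line alone contributes a main term $2i\cL_\cQ\sqrt{\pi/a}$; the $y_2$-line is killed by taking $y_2=\frac{\ln 32}{L_{min}}$ and the stated $a$. No appeal to Theorem \ref{Th:DavPush}, no integration over $x_0$, and no contradiction argument is needed.

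Your proposed route fails on three counts. First, your contradiction scheme (``if the slab around \emph{every} $x_0$ had few resonances, then \dots'') would at best produce \emph{one} good $x_0$, whereas the proposition asserts the bound for an \emph{arbitrary} $x_0\in\R$; the pointwise main term $2i\cL_\cQ\sqrt{\pi/a}$ is exactly what makes the uniform-in-$x_0$ statement possible. Second, you place the lines at $y_1\approx Y$ and $y_2=0$, ``just outside'' the resonance strip; but the Neumann-series bounds \eqref{fPrimesurfPos} and \eqref{eq:fPrimeSurfReste} degenerate as $\Im z\to 0^+$ and $\Im z\to Y^-$ (the trace is not controlled on the real axis at all), so one must move a definite distance away --- hence the $\ln 16$ and $\ln 32$. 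Third, your pointwise bound $|\Tr[\cdots]|\le 2L_{max}\frac{1+\ln(D+n_0)}{L_{min}}+0.6$ is a misidentification: that constant is $N_0 L_{min}\ln 2/\cL_\cQ$ from the Jensen estimate \eqref{eq:SuperJensen}, while the trace on the boundary lines is of size $\cL_\cQ$, not $O(1)$. A last cosmetic point: centering the Gaussian at $x_0+iy_1$ rather than at $x_0$ is what makes each resonance's contribution bounded by $e^{a(y_2-y_1)^2}=2$ and yields the final factor $\frac{1}{8}$.
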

\end{tcolorbox}

\begin{rem}
Theorem 3 in \cite{Langer}, along with the discussion in \cite[\S 3]{NonWeyl}  implies that, if $x_1< x_2$ are such that $\cQ$ has no resonance $z$ such that $\Re z = x_1$ or $x_2$, then we have
\begin{equation}\label{eq:EncadrClassique}
 -|B(\cQ)| + \frac{\cL_\cQ}{2\pi} (x_2-x_1) \leq  \mathcal{N}_{\cQ} \left( \left\{ \Re z \in (x_1, x_2) \right\} \right)  \leq |B(\cQ)| + \frac{\cL_\cQ}{2\pi} (x_2-x_1),
\end{equation}
where $|B(\cQ)|$ is the number of oriented edges of the quantum  graph $\cQ$.

Equation (\ref{eq:EncadrClassique}) directly implies  Theorem \ref{Th:DavPush}, and it is very relevant when the graph $\cQ$ is fixed, and $x_2-x_1$ is taken large enough. However, when $|B(\cQ)|$ is large, to obtain a non trivial lower bound on the number of resonances using (\ref{eq:EncadrClassique}), one must take $x_2-x_1$ large, and hence work in large boxes. By contrast, equation (\ref{eq:CondAlpha2}) implies a condition on the size of the boxes under consideration which is hard to express, but is independent of the size of the graph: it depends only on $D, n_0, L_{min}$ and $L_{max}$. 

Hence, Proposition \ref{Prop:GaussLowerBound} is a real improvement over (\ref{eq:EncadrClassique}) when working with large graphs.
\end{rem}

\paragraph{Numerical example}

Suppose we take $n_0=1$, $D=4$, $L_{min}=1$, $L_{max}=2$. We then have $Y=-\ln 5$, $a\approx 9.5\times 10^{-3}$, $\frac{1}{8}\sqrt{\frac{\pi}{a}}\approx 2.3$, and a tedious computation implies that (\ref{eq:CondAlpha2}) can be rephrased as
$$\frac{\alpha}{L_{min}} \geq 26.7.$$

Therefore,  every vertical strip of length at least $54$ contains at least $2.3\times  \cL_\cQ$ resonances.

On the other hand, we know that the number of resonances in $-R \leq \Re z \leq R$ is of the order of $R \frac{2\cL_\cQ}{\pi}$ as $R$ is large.  
Hence,  a vertical strip of length $54$ contains,  on average,  $\frac{54 \cL_{\cQ}}{\pi}\approx 17 \times \cL_\cQ$ resonances.
We thus see that, though our result is far from being sharp, it is only one order of magnitude smaller than the average result.

\subsection{Asymptotic distribution of resonances of large quantum graphs}

As explained in the previous paragraph, Theorem \ref{Prop:GaussCount2} can be used to estimate numbers of resonances, even for large graphs. Actually, it would be desirable to understand the asymptotic spectral properties of sequences of quantum graphs.

Namely, given a sequence of (larger and larger) quantum graphs $(\cQ_N)$, can one give an asymptotic formula for the number of resonances in some given region of the complex plane (independent of $N$)? This kind of problematic was first raised in \cite{Ing} for open quantum graphs, but similar results existed for the spectrum of closed quantum graphs (see \cite{BSQG}). There has also been a large interest in the asymptotic properties of the eigenfunctions of large quantum graphs: see \cite{QEQG} and the references therein.

When studying the spectral asymptotics of large quantum graphs, a natural assumption is that of \emph{Benjamini-Schramm convergence}, whose precise definition we recall in section \ref{sec:BS}, and which can be informally described as follows. Around a vertex $v_0$ in a quantum graph, one can define several ``local" quantities: for instance, the number of neighbours of $v_0$, the average length of the edges attached to $v_0$, the number of cycles in a ball of radius 10 around $v_0$...
One may then take $v_0$ uniformly at random in the graph, so as to make statistics of these local quantities. We say that a sequence of quantum graphs converges in the sense of Benjamini-Schramm if all these statistics of local quantities converge. The Benjamini-Schramm limit is then a probability measure on the set of rooted quantum graphs $\mathrm{ROQ}$.

The appeal for Benjamini-Schramm convergence comes from its compactness properties (see Lemma \ref{lem:QCompact}): given a sequence of quantum graphs with uniformly bounded data, one can always extract a subsequence converging in the sense of Benjamini-Schramm. Of course, Benjamini-Schramm convergence deals with local quantities, and spectral quantities have no reason to be local, since eigenfunctions can be delocalized all over the graph. Still, we make the following conjecture about the convergence of the empirical spectral measures. These are locally finite Borel measures on $\C$, given by
$$\mu_\mathcal{Q}:= \frac{1}{\cL_\cQ} \sum_{z\in \mathrm{Res}(\cQ)} \delta_{z},$$
the sum being repeated with the multiplicity of the resonances.

\begin{tcolorbox}
\begin{Conj}
Let $D, n_0\in \N$,  and let $0<L_{min}\leq L_{max}$. Let $\mathbb{P}\in \mathcal{P}(\mathrm{ROQ})$. Then there exists a locally finite Borel measure $\mu_{\mathbb{P}}$ on $\C$, 
such that the following holds.

If $(\cQ_N)$ is a sequence of quantum graphs belonging to $\mathfrak{Q}_{D, n_0, L_{min}, L_{max}}$ which converges in the sense of Benjamini-Schramm to $\mathbb{P}$, then $(\mu_{\cQ_N})$ converges vaguely to $\mu_{\mathbb{P}}$. In other words, for $\chi \in C_c(\C)$, we have
$$\sum_{z\in \mathrm{Res}(\cQ_N)} \chi(z) \longrightarrow \int_\C \chi(z) \mathrm{d} \mu_{\mathbb{P}}(z).$$
\end{Conj}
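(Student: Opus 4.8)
Since the trace formula of Theorem~\ref{Prop:GaussCount2} is only available for unbalanced graphs, the plan is to establish the conjecture first for sequences lying in $\kQ'_{D,n_0,L_{min},L_{max}}$, by combining that formula with a periodic‑orbit expansion and the definition of Benjamini--Schramm convergence (the balanced case would require a different input, since even the secular description degenerates there). The starting point is a compactness reduction: by (\ref{eq:ResonancesInStrip}) all resonances of every $\cQ\in\kQ_{D,n_0,L_{min},L_{max}}$ lie in the \emph{fixed} horizontal strip $\R+i[Y(D,n_0,L_{min}),0]$, and by (\ref{eq:EncadrClassique}), together with the elementary bound $|B(\cQ)|\le C(D,n_0,L_{min})\,\cL_\cQ$ (the finite part has degree $\le D$ and edge lengths $\ge L_{min}$), the measures $\mu_{\cQ_N}$ are locally uniformly bounded, hence vaguely precompact. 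It therefore suffices to prove that $\langle g,\mu_{\cQ_N}\rangle:=\tfrac1{\cL_{\cQ_N}}\sum_{z\in\Res(\cQ_N)}g(z)$ converges as $N\to\infty$ for $g$ running over a class rich enough to determine a Radon measure on that strip (e.g. the Gaussians $g(z)=e^{-a(z-z_0)^2}$, which are admissible in Theorem~\ref{Prop:GaussCount2}); the limit then \emph{defines} $\mu_{\mathbb P}$, and any vague sub‑limit of $(\mu_{\cQ_N})$ is forced to coincide with it, giving convergence of the whole sequence.

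Next I would exploit the Kirchhoff structure of $U_\cQ(z)$. Up to conventions one has a factorization $U_\cQ(z)=\Sigma_\cQ\,\mathcal{R}_\cQ\,E_\cQ(z)$, where $E_\cQ(z)=\mathrm{diag}(e^{iz\ell_b})$ over the oriented finite edges, $\mathcal{R}_\cQ$ is the (unitary, involutive) reversal of orientations, and $\Sigma_\cQ=\bigoplus_v\sigma_v$ is block‑diagonal over the vertices, each block being the compression to the finite edges of the explicit Kirchhoff scattering matrix at $v$. Two facts follow. First, $\|U_\cQ(x+iy_2)\|\le e^{-y_2L_{min}}$ for $y_2>0$. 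Second, for an \emph{unbalanced} graph every $\sigma_v$ is invertible with $\|\sigma_v^{-1}\|\le D+n_0$ (its singular values are $1$ and $|d_v^{\mathrm{fin}}-m_v|/(d_v^{\mathrm{fin}}+m_v)\ge (D+n_0)^{-1}$), so $\Sigma_\cQ^{-1}=\bigoplus_v\sigma_v^{-1}$ is again block‑diagonal and $\|U_\cQ(x+iy_1)^{-1}\|\le e^{y_1L_{min}}(D+n_0)<1$ as soon as $y_1<Y(D,n_0,L_{min})$. Fixing such $y_1$ and $y_2$ in (\ref{eq:GaussCount}), using $U_\cQ'=iU_\cQ\,\mathrm{diag}(\ell)$, and expanding $(\mathrm{Id}-U_\cQ)^{-1}$ by its Neumann series on the line $y_2$ and by $-\sum_{k\ge1}U_\cQ^{-k}$ on the line $y_1$, one obtains
\[
\Tr\!\big[U_\cQ'(\mathrm{Id}-U_\cQ)^{-1}\big]\big|_{y_2}=i\sum_{m\ge1}\Tr\!\big[\mathrm{diag}(\ell)\,U_\cQ^{m}\big],\qquad
\Tr\!\big[U_\cQ'(\mathrm{Id}-U_\cQ)^{-1}\big]\big|_{y_1}=-2i\cL_\cQ-i\sum_{m\ge1}\Tr\!\big[\mathrm{diag}(\ell)\,U_\cQ^{-m}\big],
\]
where, by $|\Tr[\mathrm{diag}(\ell)M]|\le L_{max}|B(\cQ)|\,\|M\|$, both series converge geometrically and, after dividing by $\cL_{\cQ_N}$, \emph{uniformly in $N$}. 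Substituting into (\ref{eq:GaussCount}) and dividing by $2i\pi\cL_{\cQ_N}$ gives
\[
\langle g,\mu_{\cQ_N}\rangle=\frac1\pi\int_\R g(x+iy_1)\,\mathrm{d}x+\frac1{2\pi}\sum_{m\ge1}\Big(T^{(1)}_{m,N}(g)-T^{(2)}_{m,N}(g)\Big),
\]
with $T^{(j)}_{m,N}(g)=\tfrac1{\cL_{\cQ_N}}\int_\R g(x+iy_j)\,\Tr[\mathrm{diag}(\ell)\,U_{\cQ_N}^{\mp m}(x+iy_j)]\,\mathrm{d}x$.

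Now comes the locality observation. The matrices $\Sigma_\cQ$, $\mathcal{R}_\cQ$, $E_\cQ(z)$ and $\Sigma_\cQ^{-1}$ all have entries indexed by pairs of oriented finite edges at bounded distance, so $\Tr[\mathrm{diag}(\ell)\,U_\cQ^{\pm m}(z)]=\sum_b\ell_b\,(U_\cQ^{\pm m})_{bb}(z)$ is a sum over oriented finite edges $b$ of a quantity depending only on the rooted metric ball of radius $O(m)$ around $b$ (including the pattern of infinite edges in that ball). Hence, for each fixed $m$, $T^{(j)}_{m,N}(g)$ is a ratio of two averages of bounded local functionals over the oriented finite edges of $\cQ_N$, the denominator $\tfrac1{|B^{\mathrm{fin}}_N|}\sum_b\ell_b$ being bounded below by $L_{min}$; by Benjamini--Schramm convergence of $(\cQ_N)$ to $\mathbb P$ (Section~\ref{sec:BS}) each such average converges, so $T^{(j)}_{m,N}(g)\to T^{(j)}_{m,\infty}(g)$. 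Combining this with the uniform geometric tail bound to interchange $\lim_N$ and $\sum_m$, one gets $\langle g,\mu_{\cQ_N}\rangle\to\frac1\pi\int_\R g(x+iy_1)\mathrm{d}x+\frac1{2\pi}\sum_{m\ge1}\big(T^{(1)}_{m,\infty}(g)-T^{(2)}_{m,\infty}(g)\big)=:\langle g,\mu_{\mathbb P}\rangle$. Running this over the determining class of the first paragraph, and using that $\mu_{\mathbb P}$ is a vague limit of positive, locally uniformly bounded measures, produces a positive locally finite Borel measure $\mu_{\mathbb P}$, necessarily independent of the choice of $y_1<Y$ and $y_2>0$ (the left‑hand side is), and then $(\mu_{\cQ_N})\to\mu_{\mathbb P}$ vaguely; finally one checks $\mu_{\mathbb P}$ depends only on $\mathbb P$, not on the approximating sequence.

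\emph{Main obstacle.} The algebra above is routine; the difficulty is uniformity. On the upper line the sub‑unitarity $\|U_\cQ(x+iy_2)\|\le e^{-y_2L_{min}}$ handles everything at once, but on the lower line one genuinely needs $\|U_\cQ(x+iy_1)^{-1}\|<1$ uniformly over the whole family, which rests on $\|\sigma_v^{-1}\|\le D+n_0$ and hence on the graph being \emph{unbalanced}: this is the point where unbalancedness enters essentially, beyond Theorem~\ref{Th:DavPush}. The other sensitive steps are (i) matching the length‑weighted, oriented‑finite‑edge, lead‑dependent local functionals $\Tr[\mathrm{diag}(\ell)\,U_\cQ^{\pm m}]$ with the precise notion of Benjamini--Schramm convergence used in the paper, and (ii) the Tauberian upgrade from convergence of $\langle g,\mu_{\cQ_N}\rangle$ on the analytic test class to vague convergence against all of $C_c(\C)$. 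None of these is deep, but carrying them all through cleanly — and covering the balanced case at all — is where the remaining work lies, which is why the statement is left as a conjecture.
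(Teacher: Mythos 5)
The statement you are proving is labelled a \emph{Conjecture} in the paper, and the paper does not prove it: the only result established there is Theorem~\ref{Th:BS}, which gives convergence of $\langle \mu_{\cQ_N}, g\rangle$ \emph{only} for unbalanced graphs and \emph{only} for test functions $g$ that are holomorphic and $L^1$ on a strip containing the resonances. Your second and third paragraphs essentially reproduce the proof of that theorem (trace formula on two horizontal lines, Neumann series for $(\mathrm{Id}-U_\cQ)^{-1}$ above the strip and for $(\mathrm{Id}-U_\cQ^{-1})^{-1}$ below it using $\|\sigma_v^{-1}\|\le D+n_0$, locality of $\Tr[\mathrm{diag}(\ell)U_\cQ^{\pm m}]$, Benjamini--Schramm convergence of the resulting edge averages), and that part is sound and matches the paper's argument.

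The gap is the step you call the ``Tauberian upgrade'' and dismiss as ``not deep'': it is false as stated, and it is precisely why the statement remains a conjecture. Functions holomorphic and $L^1$ on $\Omega_{y_1-\varepsilon,y_2+\varepsilon}$ do \emph{not} form a determining class for positive Radon measures supported in the strip $\R+i[Y,0]$. Concretely, let $\mu$ and $\nu$ be arc-length (times any fixed constant) on two distinct horizontal lines $\Im z=c$ and $\Im z=c'$ inside the strip; both are positive, locally finite, and locally uniformly bounded, yet for every admissible $g$ one has $\int_\R g(x+ic)\,\mathrm{d}x=\int_\R g(x+ic')\,\mathrm{d}x$ by Cauchy's theorem on the rectangle between the two lines (the lateral sides vanish along a subsequence exactly as in the proof of (\ref{eq:a})). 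So two distinct vague sub-limits of $(\mu_{\cQ_N})$ could integrate every Gaussian identically, and your argument cannot rule this out: the limit functional $\ell_g$ you construct does not pin down a unique measure $\mu_{\mathbb P}$, let alone give $\sum_{z\in\Res(\cQ_N)}\chi(z)\to\int\chi\,\mathrm{d}\mu_{\mathbb P}$ for general $\chi\in C_c(\C)$. (Indeed the trace formula itself already tells you that $\sum_z g(z)$ only sees $g$ through its restriction to two lines \emph{outside} the strip, so no purely holomorphic test class can localize resonances inside it.) A secondary point: (\ref{eq:ResonancesInStrip}) and the bound $\|\sigma_v^{-1}\|\le D+n_0$ hold only for $\kQ'_{D,n_0,L_{min},L_{max}}$, so the balanced case of the conjecture --- which is part of the statement --- is not addressed at all by this strategy, as you acknowledge.
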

\end{tcolorbox}

This conjecture was proven in  \cite{Ing} for unbalanced quantum graphs, in the case where $\mathbb{P}$ is supported on the set of closed quantum graphs, i.e., of quantum graphs such that $\boldsymbol{n} \equiv 0$. Note that the end of the proof of \cite[Theorem 2]{Ing} can be considerably simplified using our Theorem \ref{Prop:GaussCount2}. Previously, the analogue of the conjecture was proven in \cite{BSQG} for closed quantum graph in a very general setting (including potentials on the edges, and general coupling conditions).

Note that, thanks to Prokhorov's theorem and to upper bounds on the number of resonances in a bounded set (see \eqref{eq:SuperJensen} below, which is a consequence of Jensen's formula), up to extracting a subsequence, $(\mu_{\mathcal{Q}_N})$ converges vaguely; the statement of the conjecture is thus that all the accumulation points for the vague topology are the same, and that they only depend on $\mathbb{P}$. To prove this, it would be sufficient to prove that, for any $\chi \in C_c^\infty(\C)$, we have
\begin{equation}\label{eq:ConjLimit}
\sum_{z\in \mathrm{Res}(\cQ_N)} \chi(z) \longrightarrow \ell_\chi,
\end{equation}
and that the limit $\ell_\chi$ depends only on $\mathbb{P}$ and $\chi$.

We are going to show (\ref{eq:ConjLimit}) for any limit $\mathbb{P}$ (but still only for unbalanced graphs), but only for a special family of functions $\chi$, that are holomorphic in a strip.

\begin{tcolorbox}
\begin{theoreme}\label{Th:BS}
Let $D, n_0\in \N$,  and let $0<L_{min}\leq L_{max}$. Let $(\cQ_N)$ be a sequence of quantum graphs belonging to $\mathfrak{Q}'_{D, n_0, L_{min}, L_{max}}$. Suppose that $(\cQ_N)$ converges in the sense of Benjamini-Schram to some measure $\mathbb{P}$. 

There exist functions $\Lambda_\mathbb{P} : \Omega_{-\infty, Y} \cup \Omega_{0, +\infty} \longrightarrow \C$ such that the following holds:
\begin{itemize}
\item for any $ y\in (-\infty, Y)\cup (0, +\infty)$, we have $|\Lambda_\mathbb{P}(x+iy)| \leq M(y)$ for some $M(y)$ depending on $y$ and $D, n_0, L_{min}, L_{max}$, but not on $x$.
\item If $y_1 <Y(D, n_0, L_{min}, L_{max})$, $y_2>0$, and $\varepsilon>0$ and if $g\in L^1(\Omega_{y_1- \varepsilon,y_2+ \varepsilon})\cap \mathcal{H}(\Omega_{y_1-\varepsilon,y_2+ \varepsilon})$, we have
\begin{equation}
\langle \mu_{\cQ_N}, g \rangle \underset{N\to \infty}{\longrightarrow} \sum_{j=1,2} \int_\R g(x + iy_j) \Lambda_\mathbb{P}(x+iy_j)\mathrm{d}x.
\end{equation}
\end{itemize}
\end{theoreme}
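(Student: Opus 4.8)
The plan is to use Theorem~\ref{Prop:GaussCount2} to rewrite $\langle\mu_{\cQ_N},g\rangle$ as two line integrals and then pass to the limit inside the integrand. Since $\mu_{\cQ_N}=\cL_{\cQ_N}^{-1}\sum_{z\in\Res(\cQ_N)}\delta_z$, applying Theorem~\ref{Prop:GaussCount2} to each (finite) $\cQ_N\in\kQ'_{D,n_0,L_{min},L_{max}}$ with $y_1<Y$ and $y_2>0$ (so that $\mathrm{Id}-U_{\cQ_N}$ is invertible on both lines) gives $\langle\mu_{\cQ_N},g\rangle=\sum_{j=1,2}\int_\R g(x+iy_j)\,\lambda_{\cQ_N}(x+iy_j)\,\dd x$, where
\[
\lambda_\cQ(z):=-\frac{1}{2i\pi\,\cL_\cQ}\Tr\big[U'_\cQ(z)\,(\mathrm{Id}-U_\cQ(z))^{-1}\big]
\]
is holomorphic on $\Omega_{-\infty,Y}\cup\Omega_{0,+\infty}$ (there are no resonances there). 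I would \emph{define} $\Lambda_\mathbb{P}(z):=\lim_{N\to\infty}\lambda_{\cQ_N}(z)$ on those two strips, and then establish: (a) this limit exists and depends only on $\mathbb{P}$; (b) $|\lambda_{\cQ_N}(x+iy)|\le M(y)$ uniformly in $N$ and $x$, so that $|\Lambda_\mathbb{P}(x+iy)|\le M(y)$; (c) the conclusion, by dominated convergence in each $x$-integral, using $g(\cdot+iy_j)\in L^1(\R)$ — which holds because, $g$ being holomorphic and integrable on the open strip $\Omega_{y_1-\varepsilon,y_2+\varepsilon}$, the map $y\mapsto\int_\R|g(x+iy)|\,\dd x$ is finite throughout that open strip, which contains the lines $\{\Im z=y_j\}$ — so that $x\mapsto M(y_j)\,|g(x+iy_j)|$ is an admissible dominating function.

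For (a), recall (Section~\ref{sec:Secular}) that $U_\cQ(z)=\Sigma_\cQ D_\cQ(z)$, where $D_\cQ(z)=\mathrm{diag}(e^{iz\ell_e})$ runs over the oriented finite edges and $\Sigma_\cQ$ (independent of $z$) is the bond-scattering matrix built from the vertex-scattering matrices. On $\Omega_{0,+\infty}$ one has $\|U_{\cQ_N}(z)\|\le e^{-(\Im z)L_{min}}<1$, and on $\Omega_{-\infty,Y}$ one has $\|U_{\cQ_N}(z)^{-1}\|\le(D+n_0)\,e^{(\Im z)L_{min}}<1$ (this is exactly the estimate recalled in Section~\ref{seq:ProofStrip}), with constants depending only on $D,n_0,L_{min}$ and $\Im z$. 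Hence the Neumann series gives
\[
\lambda_{\cQ_N}(z)=\mp\frac{1}{2i\pi}\sum_{m}\frac{1}{\cL_{\cQ_N}}\Tr\big[U'_{\cQ_N}(z)\,U_{\cQ_N}(z)^{\pm m}\big]
\]
($m\ge0$, resp.\ $m\ge1$). For fixed $m$ and $z$, $\Tr[U'_\cQ(z)U_\cQ(z)^{\pm m}]=\sum_b\big(U'_\cQ(z)U_\cQ(z)^{\pm m}\big)_{bb}$, the sum over oriented finite edges $b$; each diagonal entry is a sum over closed walks of combinatorial length $m+1$ issued from $b$, with weights that are products of at most $m+1$ vertex-scattering coefficients, of corresponding edge lengths, and of phases $e^{\pm iz\ell_e}$. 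Such a walk stays in the combinatorial ball of radius $m+1$ about the base vertex of $b$, so the $b$-th term is a bounded, continuous functional of the quantum graph rooted at $b$ that depends only on that ball; grouping the oriented edges by their origin makes $\Tr[U'_\cQ(z)U_\cQ(z)^{\pm m}]$ a sum over the vertices of $\cQ$ of a bounded local functional of the rooted quantum graph. Since edge lengths and the numbers of infinite edges at each vertex are part of the local data, Benjamini--Schramm convergence yields convergence of the corresponding vertex averages, as well as convergence of $\cL_{\cQ_N}/|V(\cQ_N)|$ to a positive constant; after the routine conversions between vertex-, oriented-edge- and total-length normalizations, $\cL_{\cQ_N}^{-1}\Tr[U'_{\cQ_N}(z)U_{\cQ_N}(z)^{\pm m}]\to c^{\pm}_m(z)$ for some $c^{\pm}_m(z)$ depending only on $\mathbb{P}$, $z$ and $m$.

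To interchange $\lim_N$ and $\sum_m$, use $|\Tr A|\le(\text{size of }A)\,\|A\|$, the fact that $U_{\cQ_N}(z)$ has size at most $\frac{2}{L_{min}}\cL_{\cQ_N}$ (each finite edge has length $\ge L_{min}$), that $\|U'_{\cQ_N}(z)\|=\|\Sigma_{\cQ_N}D'_{\cQ_N}(z)\|$ is bounded in terms of $\Im z,L_{min},L_{max}$, and the uniform bounds $\|U_{\cQ_N}(z)^{\pm1}\|\le\rho<1$ above; this gives $\cL_{\cQ_N}^{-1}\big|\Tr[U'_{\cQ_N}(z)U_{\cQ_N}(z)^{\pm m}]\big|\le C(\Im z)\,\rho^{|m|}$ independently of $N$, so the Weierstrass $M$-test applies: $\lambda_{\cQ_N}(z)\to\Lambda_\mathbb{P}(z):=\mp\frac{1}{2i\pi}\sum_m c^{\pm}_m(z)$, and summing the same bound over $m$ yields the uniform estimate $|\Lambda_\mathbb{P}(x+iy)|\le M(y)$ (consistent with (\ref{eq:c})). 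Feeding this into the dominated-convergence step of (c) gives $\langle\mu_{\cQ_N},g\rangle\to\sum_{j=1,2}\int_\R g(x+iy_j)\Lambda_\mathbb{P}(x+iy_j)\,\dd x$, which is the assertion.

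I expect the main obstacle to be step (a): making rigorous that, for each fixed $m$ and $z$, $\cL_{\cQ_N}^{-1}\Tr[U'_{\cQ_N}(z)U_{\cQ_N}(z)^{\pm m}]$ is an average over the graph of a genuinely \emph{local}, bounded, continuous functional in the sense of the local topology on $\mathrm{ROQ}$ — i.e.\ checking that the vertex-scattering coefficients (which are \emph{not} unitary at unbalanced vertices) together with the phases $e^{\pm iz\ell_e}$ assemble into such a functional, and carefully managing the bookkeeping between rooting at oriented edges, rooting at vertices, and the $\cL_\cQ$-normalization (in particular the positivity of $\lim_N\cL_{\cQ_N}/|V(\cQ_N)|$). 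The remaining ingredients — Theorem~\ref{Prop:GaussCount2}, the uniform geometric-series bounds, and the two dominated-convergence arguments — are then routine.
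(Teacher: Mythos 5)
Your proposal is correct and follows essentially the same route as the paper: apply Theorem \ref{Prop:GaussCount2}, expand $(\mathrm{Id}-U_\cQ)^{-1}$ as a Neumann series in $U_\cQ^{\pm 1}$ on the two strips, recognize the normalized trace as a Benjamini--Schramm average of bounded local functionals, and conclude by dominated convergence in $x$. The only (immaterial) differences are that the paper packages the whole diagonal resolvent entry into a single continuous bounded functional $F_{x+iy_j}$ on $\mathrm{ROQ}$ rooted at \emph{bonds} (so the prefactor is simply $|B(\cQ_N)|/\cL_{\cQ_N}=(\frac{1}{|B|}\sum_b L_b)^{-1}$, avoiding your vertex-regrouping bookkeeping), rather than passing to the limit term by term in the Neumann series as you do.
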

\end{tcolorbox}

\paragraph{Organization of the paper}
In section \ref{Sec:Prelim}, we will recall the definition of open quantum graphs, of their resonances, and of the secular equation. Section \ref{sec:Proof} will be devoted to the proof of Theorem \ref{Prop:GaussCount2}. In section \ref{sec:Consequences}, we will use Theorem \ref{Prop:GaussCount2} to prove Proposition \ref{Prop:GaussLowerBound}. We will also give an alternative proof of Theorem \ref{Th:BS} for unbalanced graphs. Finally, in section \ref{sec:BS}, we will recall the definition of Benjamini-Schramm convergence for open quantum graphs, and we will prove Theorem \ref{Th:BS}.

\paragraph{Acknowledgements} The author was partially funded by the Agence Nationale de la Recherche, through the project ADYCT (ANR-20-CE40-0017).

\section{Open quantum graphs and their resonances} \label{Sec:Prelim}
\subsection{Definition of open quantum graphs }
An (open) quantum graph is given by a finite graph, where each edge is given a length and where infinite edges (called \emph{leads}) are attached to some of the vertices. More precisely
\begin{tcolorbox}
\begin{definition}
A \emph{quantum graph} $\mathcal{Q}=(V,E,L, \mathbf{n})$ is the data of
\begin{itemize}
\item A graph $G=(V,E)$  with vertex set $V$ and edge set $E$.
\item A map $L: E\rightarrow (0,\infty)$.
\item A map $\mathbf{n}: V \longrightarrow \N\cup \{0\}$.
\end{itemize}

The quantum graph will be called \emph{finite} if $G$ is a finite graph. 
\end{definition}
\end{tcolorbox}

The graph $(V,E)$ should be thought of as the compact part of our graph, the map $L$ gives the length of the edges in the compact part, while the map $\mathbf{n}$ gives the number of infinite edges attached to each vertex. If $v\in V$, we denote by $\textcolor{black}{d(v)}$ the (internal) degree of $v$, i.e., the number of $e\in E$ to which $v$ belongs. We define the total length of the (internal part of the) graph by
\begin{equation}\label{eq:LongueurTotale}
\mathcal{L}_\mathcal{Q}:= \sum_{e\in E} L(e).
\end{equation}

We let $B= B(\cQ)$ be the set of oriented internal edges (or bonds) associated to $E$. If $b\in B$, we shall denote by $\hat{b}$ the reverse bond. We write $o_b$ for the origin of $b$ and $t_b$ for the terminus of $b$.  We will also write $L_b$ for the length of the edge to which $b$ is associated.

We define the external bonds by $B_{ext} := \bigsqcup_{v\in V} \bigsqcup _{k=1}^{\mathbf{n}(v)}\textcolor{black}{ \{}(v, k) \textcolor{black}{\}}$, and we write $o_b= v$ if $b=(v,k)\in B_{ext}$, and set $\hat{B}= B\cup B_{ext}$.

In the sequel, we will always consider graphs with bounded data, as in the following definition:

\begin{tcolorbox}
\begin{definition}
Let $D, n_0\in \N$,  and let $0<L_{min}\leq L_{max}$.
We denote by $\mathfrak{Q}_{D, n_0, L_{min}, L_{max}}$ the set of open quantum graphs such that we have
\begin{align*}
& \forall v\in V, \textcolor{black}{d(v)}\leq D \text{ and } \mathbf{n}(v)  \le n_0\\
&\forall e\in E, L_{min} \le L(e)\le L_{max}.
\end{align*}

We will denote by $\kQ'_{D, n_0, L_{min}, L_{max}}$ the set of quantum graphs in $\kQ_{D, n_0, L_{min}, L_{max}}$ such that
\begin{equation}\label{eq:Unbalanced}
\forall v\in V, \mathbf{n}(v) \neq d(v).
\end{equation}
\end{definition}
\end{tcolorbox}



\subsection{Scattering resonances of open quantum graphs}\label{sec:Res}

A $C^2$ function on the graph will be a collection of maps $f=(f_b)_{b\in \hat{B}}$, with $f_b\in C^2([0,L_b])$ if $b\in B$, $f_b\in C^2([0,\infty))$ if $b\in B_{ext}$, and such that
\begin{equation} \label{eq:CondSym}
\forall b\in B, f_b (\cdot) = f_{\hat{b}}(L_b - \cdot).
\end{equation}
 We will write $C^2(\mathcal{Q})$ for the set of such functions. We may now define the boundary conditions we put at the vertices.

\begin{tcolorbox}
\begin{definition}\label{def:Kirch}
We say that $f\in C^2(\mathcal{Q})$ satisfies \emph{Kirchhoff boundary conditions} if it satisfies
\begin{itemize}
\item \textbf{Continuity}: For all $b,b'\in \hat{B}$, we have $f_b(0) = f_{b'}(0) =:f(v)$ if $o_b=o_{b'}=v$.\\
\item \textbf{Current conservation:} For all $v\in V$, 
\begin{equation*}
\sum_{b\in \hat{B}:o_b=v} f_b'(0)= \textcolor{black}{0}\,.
\end{equation*}
\end{itemize}
\end{definition}
\end{tcolorbox}

Let us now move to the definition of scattering resonances of quantum graphs.

\begin{tcolorbox}
\begin{definition}\label{def:Res}
Let $\cQ$ be a finite quantum graph.
A number $z\in \C$ is called a \emph{scattering resonance} of $\cQ$ if there exists $f\in C^2(\cQ)$ such that
\begin{enumerate}
\item $f$ satisfies the Kirchhoff boundary conditions.
\item For all $b\in \hat{B}$, we have $-f_b''= z^2 f_b$.
\item For all $b\in B_{ext}$, we have $f_b(x) = f_b(0) e^{i zx}$.
\end{enumerate}


We will write
$$\mathrm{Res}(\mathcal{Q}):= \{ \text{Resonances of } \mathcal{Q}\} \subset \C.$$

\end{definition}
\end{tcolorbox}


Note that, if we did not impose the last condition (and if $\mathbf{n}$ is not identically zero), then any number $z\in \C$ would be a scattering resonance. On the other hand condition 3. imposes that scattering resonances must have negative imaginary part. Otherwise, the function $f$ would be an eigenfunction of a selfadjoint operator, associated to a non-real eigenvalue.

\subsubsection{The secular equation for scattering resonances}\label{sec:Secular}

If $b, b'\in B(\cQ)$, we define the quantity
$$\sigma_{b,b'} = \begin{cases} \frac{2}{\boldsymbol{n}(v)+d(v)} &\text{ if } o_b=o_{b'}=v \text{ and } b'\neq b\\
\frac{2}{\boldsymbol{n}(v)+d(v)} - 1 &\text{ if } b'=b \text{ with } o_b=v\\
0 &\text{ if } o_b\neq o_{b'}. \end{cases}$$

We then define the matrices $D_\cQ(z)$, $S_\cQ$ and $U_\cQ(z)$ whose lines and columns are indexed by the elements of $B(\cQ)$ by
\begin{equation}\label{eq:DefU}
\begin{aligned}
D_\cQ(z)_{b,b'}&= \delta_{b,b'} \ee^{iz L_b}\\
(S_\cQ)_{b,b'} &= \sigma_{b,\hat{b'}}\\
U_\cQ(z) &= S_\cQ D_\cQ(z).
\end{aligned}
\end{equation}


As explained in the introduction, these matrices give a characterization of scattering resonances:
\begin{equation*}
\forall z\in \C \setminus \{0\},~~~~z \text{ is a resonance } \Longleftrightarrow \mathrm{\det} (\mathrm{Id} - U_\cQ(z)) = 0.
\end{equation*}

We refer the reader to \cite[\S 3.2]{Ing} for a proof of the previous equation.

The multiplicity of a resonance $z_0$ will be defined the order of the zero $z_0$ of the holomorphic function $z\mapsto \mathrm{\det} (\mathrm{Id} - U_\cQ(z)) $. We refer to \cite{NonWeyl} for a proof of the fact that this definition of multiplicity coincides with the  other natural definitions when $z\neq 0$.   Note that, when $z=0$, the various definitions of multiplicity need not coincide.

\subsubsection{Proof of \eqref{eq:ResonancesInStrip}}\label{seq:ProofStrip}
If $\cQ\in \mathfrak{Q}'_{D, n_0, L_{min}, L_{max}}$, we see that the matrices $\sigma^{(v)}$ are all invertible, and we have 
$$\|(\sigma^{(v)})^{-1}\| = \frac{\mathbf{n}(v)+ d(v)}{|d(v) - \mathbf{n}(v)|} \leq \mathbf{n}(v)+ d(v)\leq n_0 +D.$$

If $J$ is the $B\times B$ matrix such that $J_{b,b'} = \delta_{b', \widehat{b}}$, then 
the matrix $S_\cQ J$ is a block matrix with blocks $\sigma^{(v)}$, so it can be inverted block by block. We deduce that $\|S_\cQ^{-1}\|\leq n_0 + D$. In particular, we see that $U_\cQ(z)$ is invertible, and that for any $z\in \C^-$, we have

\begin{equation}\label{eq:BorneInverseU}
\|U_\cQ(z)^{-1}\| \leq (n_0 + D) e^{\Im z L_{min}}.
\end{equation}

Recalling that $\mathrm{\det} (\mathrm{Id} - U_\cQ(z)) = 0$ if and only if $\mathrm{\det} (\mathrm{Id} - U_\cQ^{-1}(z)) = 0$, we deduce (\ref{eq:ResonancesInStrip}).

\section{Proof of Theorem \ref{Prop:GaussCount2}}\label{sec:Proof}

In all the proof, we shall write $f(z) := \det ( \mathrm{Id}- U_{\cQ}(z))$, which is a holomorphic function on $\C$.

If $\Omega\subset \C$, we will denote by $\mathrm{Res}_\cQ (\Omega) := \mathrm{Res}(\cQ) \cap \Omega$, i.e., the set of resonances of $\cQ$ in $\Omega$.

\subsection{Preliminaries}

\subsubsection{Of determinants and traces}
First of all, writing, for any $z\in \C$ where $f$ does not vanish,
\begin{align*}
\det ( \mathrm{Id}- U_{\cQ}(z+z')) &= \det \left( \mathrm{Id}- U_{\cQ}(z) - z' U'_\cQ(z) + o(z')\right) \\
&=\det \left( \mathrm{Id}- U_{\cQ}(z)\right) \det \left(\mathrm{Id} - z' (\mathrm{Id}- U_{\cQ}(z))^{-1} U'_{\cQ}(z) + o(z')\right) \\
&= f(z) \left( 1 - z' \mathrm{Tr}\left[ U_{\cQ}'(z) (\mathrm{Id}- U_{\cQ}(z))^{-1} \right] + o(z)\right),
\end{align*}

 we obtain that
\begin{equation}\label{eq:DerivDet}
\begin{aligned}
\frac{f'(z)}{f(z)} &= - \mathrm{Tr}\left[ U_{\cQ}'(z) (\mathrm{Id}- U_{\cQ}(z))^{-1} \right]. \\
\end{aligned}
\end{equation}

Using the definition of $U(z)$ and the circularity of the trace, this can be rewritten as
\begin{equation}\label{eq:TraceFormula}
\begin{aligned}
\frac{f'(z)}{f(z)} &= - \mathrm{Tr}\left[U_{\cQ}'(z) (\mathrm{Id}- U_{\cQ}(z))^{-1} \right]\\
&=- \mathrm{Tr}\left[ S_{\cQ} D_{\cQ}(z)iL_\cQ (\mathrm{Id}- U_{\cQ}(z))^{-1} \right]\\
&=- \mathrm{Tr}\left[ (\mathrm{Id}- U_{\cQ}(z))^{-1} U_{\cQ}(z) iL_\cQ\right]\\
&=  i \mathcal{L}_{\cQ} - \mathrm{Tr} \left[  (\mathrm{Id}- U_{\cQ}(z))^{-1} \right],
\end{aligned}
\end{equation}
since $\mathrm{Tr}(L_\cQ) = 2 \cL_\cQ$.

\subsubsection{Reminder on the trace norm}

To estimate $f$, we will often use several norms on matrices, whose definition we now recall.

If $A$ is a $d\times d$ matrix, we shall denote by $\|A\|$ its operator norm, i.e.
$$\|A\| := \sup_{x\in \C^d \setminus \{0\}} \frac{\|Ax \|}{\|x\|}.$$

Its \emph{trace norm} is defined by 
$$\|A\|_1 := \sum_{j=1}^N \sigma_j(A),$$
where the $\sigma_j(A)$ are the singular values of $A$, i.e., the eigenvalues of $A^*A$.

The following properties of the trace norm, which are standard, will be useful in the proof:
\begin{equation}\label{eq:SchattenDim}
\|A\|_1 \leq d \|A\|,
\end{equation}
\begin{equation}\label{eq:PropSchatten}
|\mathrm{\det} (\mathrm{Id} + A)| \leq e^{\|A\|_1},
\end{equation}
\begin{equation}
\left|\mathrm{Tr}[A]\right| \leq \|A\|_1.
\end{equation}

If $A, B\in \mathcal{M}_d(\C)$, we have
\begin{equation}\label{eq:PropSchatten1}
\begin{aligned}
\|AB \|_1 &\leq \|A\| \|B\|_1,\\
\|BA \|_1 &\leq \|A\| \|B\|_1.
\end{aligned}
\end{equation}

\subsubsection{Preliminary estimates on $f$}

\paragraph{Upper bounds on $f$}
Recall that we write $f(z) := \det ( \mathrm{Id}- U_\cQ(z))$. First of all, let us note that

\begin{equation}\label{eq:BorneU}
\|U_\cQ(z)\| \leq \|S_{\cQ}\| \|e^{izL_\cQ}\| \leq \|e^{izL_\cQ}\| \leq \begin{cases} 
e^{- \Im z L_{min}} \text{ when } \Im z >0\\
e^{|\Im z| L_{max}} \text{ when } \Im z \leq 0
\end{cases}
\end{equation}

Therefore, thanks to (\ref{eq:SchattenDim}) and (\ref{eq:PropSchatten}), we have
\begin{equation}\label{eq:BorneFGenerale}
|f(z)|\leq e^{\|U_{\cQ}(z)\|_1} \leq \exp\left[ |B(\cQ)| \|U_\cQ(z)\| \right] \leq \exp \left[ |B(\cQ)|  e^{ |B(\cQ)| L_{max} \max(0, -\Im z)} \right].
\end{equation}
In particular, this quantity is independent of $\Re z$. Hence, for any $y_1\in \R$, there exists $C(y_1, \cQ)$ such that
\begin{equation}\label{eq:Bornef}
|f(z)| \leq C(y_1, \cQ) ~~\forall z \in \Omega_{y_1,+\infty}.
\end{equation}

The function $f$ being holomorphic, the Cauchy formula implies that there exists also a constant $C'(y_1, \cQ)$
such that
\begin{equation}
\label{eq:BorneDerivf}
|f'(z)| \leq C'(y_1, \cQ) ~~\forall z \in \Omega_{y_1,+\infty}.
\end{equation}

\paragraph{Lower bounds on $f(z)$ when $\Im z >0$}
Equation (\ref{eq:BorneU}) implies that  if $\Im z>0$, then $\|U_\cQ(z)\| <1$, so that $(\mathrm{Id} - U_\cQ(z))$ can be inverted by a Neumann series as
\begin{equation}\label{eq:InvDessus}
\begin{aligned}
(\mathrm{Id} - U_\cQ(z))^{-1} = \mathrm{Id} + \sum_{k=1}^{+\infty} (U_\cQ(z))^k = \mathrm{Id} + R,
\end{aligned}
\end{equation}
with 
\begin{equation}\label{eq:ControleR}
\|R\| \leq  \frac{\|U_\cQ(z)\|}{1- \|U_\cQ(z)\|}\leq \frac{e^{-\Im z L_{min}}}{1- e^{-\Im z L_{min}}}.
\end{equation}


In particular, we have, when $\Im z>0$
\begin{equation}\label{eq:PasDidee}
\begin{aligned}
 \frac{1}{|f(z)|} &=\left| \det\left((\mathrm{Id} - U_\cQ(z))^{-1} \right)\right| \\
 &=  \det \left( \mathrm{Id} +R \right)\\
 &\leq e^{|B(\cQ)| \|R\|},
 \end{aligned}
 \end{equation}
 which is independent of $\Re z$.

Therefore, we see that, for any $y_1>0$, there exists $C_0(y_1, \cQ)$ such that
\begin{equation}\label{eq:Upper}
\frac{1}{|f(z)|} \leq C_0(y_1, \cQ) ~~\forall z \in \Omega_{y_1,+\infty}.
\end{equation}

\paragraph{Lower bounds on $f(z)$ when $\Im z <Y$}

Recalling \eqref{eq:BorneInverseU}, we see that if $y_0 < Y$, there exists $c_0<1$ such that
$$\|U_\cQ(z)^{-1}\| \leq c_0 ~~ \text{ for all } z\in \C \text{ with } \Im z< y_0.$$

Hence, when $\Im z < y_0$,
\begin{equation}\label{eq:InvDessous}
(\mathrm{Id} - U_\cQ(z))^{-1} = -U_\cQ(z)^{-1}  (\mathrm{Id} - U_\cQ(z)^{-1})^{-1} =  \sum_{k=1}^{+\infty} (U_\cQ(z))^{-k},
\end{equation}
so that there exists $c_1>0$ such that
\begin{equation}\label{eq:BorneInvDessous}
\left\|(\mathrm{Id} - U_\cQ(z))^{-1} \right\| \leq c_1 ~~ \text{ for all } z\in \Omega_{-\infty, y_0}.
\end{equation}

In particular, equations (\ref{eq:InvDessus}), (\ref{eq:ControleR}) and (\ref{eq:BorneInvDessous}) along with (\ref{eq:DerivDet}) imply that, if $y_0<Y$ and $y_1>0$,  there exists $c_1= c_1(y_0, y_1, \cQ)>0$ such that
\begin{equation}\label{eq:BorneFprimesurF}
\left| \frac{f'(z)}{f(z)}\right| \leq c_1 ~~~~ \forall z\in \Omega_{-\infty, y_0} \cup \Omega_{y_1, +\infty}.
\end{equation}

\subsection{Complex analysis}

If $g$ is a holomorphic function, and if $\Omega$ is a subset of $\C$ such that $f$ does not vanish on $\partial \Omega$, then 
\begin{equation}\label{eq:Residue}
\int_{\partial\Omega} g(z) \frac{f'(z)}{f(z)} \mathrm{d}z= 2i\pi \sum_{z\in \mathrm{Res}_\cQ(\Omega)} g(z),
\end{equation}
where the integral in the left-hand side is performed clockwise, while the sum in the right-hand side is over the zeroes of $f$, repeated with multiplicity.

This is simply an application of the residue formula, noting that if $f$ has a zero of order $m$ at some point $z_0$, then $g(z) \frac{f'(z)}{f(z)}$ can be written as $m \frac{g(z_0)}{z-z_0}$ plus  a function which is holomorphic in a neighbourhood of $z_0$.


We apply equation (\ref{eq:Residue}) with $\Omega=\Omega_{x^-, x^+ ,y_1,y_2}= [x^-, x^+] + i [y_1, y_2]$, for some $y_1<  Y$ and $y_2 >0$. We take $x^- < 0 < x^+$ so that $f$ does not vanish on the horizontal sides of $\partial \Omega_{x^-, x^+ ,y_1,y_2}$.

Since $f$ has isolated zeroes, for almost every $x^-, x^+$, $f$ does not vanish on $x^\pm + i [y_1, y_2]$. Therefore, we have

\begin{equation}\label{eq:Contour}
\begin{aligned}
2i\pi \sum_{z\in \mathrm{Res}_\cQ(\Omega_{x^-, x^+,y_1,y_2})} g(z) &= - \sum_{j=1,2} \int_{x^-}^{x^+} g( x + i y_j) \mathrm{Tr} \left[ U'_{\cQ}(x + i y_j) \left( \mathrm{Id}- U_{\cQ}(x + iy_j)\right)^{-1} \right] \mathrm{d}x\\
& - i\sum_\pm \int_{y_1}^{y_2} g( x^\pm + i y) \mathrm{Tr} \left[ U'_{\cQ}(x^\pm + i y) \left( \mathrm{Id}- U_{\cQ}(x^\pm + i y)\right)^{-1} \right] \mathrm{d}y.
\end{aligned}
\end{equation}

\subsection{Lower bounds in strips}

We would like to take the limit $x^\pm \to \pm \infty$ in (\ref{eq:Contour}), and to show that the lateral terms (corresponding to the last two integrals in the right-hand side) tend to zero. To this end, we will need some more precise lower bounds on $f$ on $x^\pm+ i [y_1, y_2]$. This will be given by the following Lemma, proven in \cite[Lemma 5]{Ing}.

\begin{tcolorbox}
\begin{lemme}\label{lem:GoodPoint}
Let $z_0\in \C$, let $s,t>0$, and let $I_1 \subset [\Re z_0 -  s, \Re z_0 + s]$  be an interval.
There exists a constant $C=C(s,t)$ such that the following holds.

For any $f$ holomorphic function on $\C$, we may find $x\in I_1$,  such that, for all $y' \textcolor{black}{\in} [\Im z_0 - t, \Im z_0 + t]$, we have
\begin{equation}\label{eq:Lem}
\begin{aligned}
\left| \ln |f(x+iy')| \right| \leq C' \left( \ln \sup_{z\in D(z_0, 3 \max(s,t))} |f(z)| + \left| \ln |f(z_0)| \right| + \left| \ln |I_1| \right|  \right).
\end{aligned}
\end{equation}
\end{lemme}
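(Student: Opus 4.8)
The statement is a quantitative version of the classical fact that a holomorphic function cannot be uniformly small on a large set, combined with a Harnack-type control via the Poisson--Jensen formula. I would work on a disc $D(z_0, 3\max(s,t))$, writing $M := \sup_{z \in D(z_0, 3\max(s,t))} |f(z)|$, and first reduce to the normalized case by dividing: replacing $f$ by $f/M$ makes $\log|f| \le 0$ on the disc, so $u := \log(M/|f|)$ is a nonnegative harmonic function (away from zeros of $f$), i.e.\ essentially a potential. The content is then: the set where $u$ is too large cannot contain a whole sub-interval $I_1$ of length $|I_1|$ on which one wants a good point, so one extracts an $x \in I_1$ with $u(x+iy') $ controlled for all $y'$ in the vertical window.

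**Key steps.** First, I would invoke the Poisson--Jensen formula on a slightly smaller disc $D(z_0, 2\max(s,t))$ (so that $I_1 \times [\Im z_0 - t, \Im z_0 + t]$ sits well inside it), expressing $\log|f(w)|$ for $w$ in that disc as the Poisson integral of $\log|f|$ over the boundary circle of radius $3\max(s,t)$ (controlled from above by $\log M$) minus a sum $\sum_k \log\frac{1}{|\text{Blaschke factor}_k(w)|}$ over the zeros $a_k$ of $f$ inside. The Poisson-integral term is bounded above by $\log M$ and below by (using Harnack / the explicit Poisson kernel comparison on a compact subset) a constant times $\log M + |\log|f(z_0)||$ — here the lower bound on the Poisson mass comes from evaluating at $z_0$, where $\log|f(z_0)|$ is known. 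Second, for the Blaschke sum: the number of zeros in the disc is bounded by Jensen's formula in terms of $\log M - \log|f(z_0)|$, and the sum $\sum_k \int_{I_1} \log\frac{1}{|x - a_k|}\,\dd x$ is uniformly bounded because $\int_{I_1} \log\frac{1}{|x-a|}\,\dd x$ is bounded uniformly in $a$ by something like $|I_1|(1 + |\log|I_1||)$. Third, averaging: $\int_{I_1} \sup_{y'} |\log|f(x+iy')|| \,\dd x$ — or rather, to make the vertical supremum harmless, I would apply the two-sided bound to the disc and then use that for a subharmonic/superharmonic function the value at $x + iy'$ is controlled by an average over a small disc around it, all contained in $D(z_0,2\max(s,t))$; integrating over $x\in I_1$ and using the zero-counting and Poisson bounds gives $\int_{I_1}(\cdots)\,\dd x \le C(s,t)\big(\log M + |\log|f(z_0)|| + |\log|I_1||\big)$. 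Finally, by the mean-value inequality there exists $x\in I_1$ for which the integrand is at most $\frac{1}{|I_1|}$ times the integral, which — after absorbing the $\frac{1}{|I_1|}|I_1| = 1$ and the extra $|\log|I_1||$ term already present — yields \eqref{eq:Lem}.

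**Main obstacle.** The delicate point is handling the vertical supremum $\sup_{y'}$ uniformly: one must choose $x$ so that the bound holds simultaneously for \emph{all} $y' \in [\Im z_0 - t, \Im z_0 + t]$, not merely for one height. The clean way is to never integrate $\log|f(x+iy')|$ pointwise but rather to control the subharmonic function $w \mapsto \log|f(w)|$ on a whole vertical segment $\{x\} \times [\Im z_0 - t, \Im z_0 + t]$ by its integral over a fixed-size disc around that segment; since all such discs (for $x \in I_1$) lie in $D(z_0, 2\max(s,t))$, Fubini converts the problem back to a single area integral of $\log^- |f|$ and $\log^+|f|$ over that larger disc, which the Poisson--Jensen analysis bounds. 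One also has to be careful that the lower bound on $|f|$ can genuinely degenerate near zeros of $f$, which is exactly why the conclusion only provides \emph{some} good $x$ rather than all $x$, and why the $|\log|I_1||$ term is unavoidable (a short interval $I_1$ may be forced to lie near a zero line). Since this is quoted from \cite[Lemma 5]{Ing}, I would in practice simply cite it, but the above is the self-contained argument I would reconstruct.
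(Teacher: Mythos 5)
The paper gives no proof of this lemma at all---it is imported verbatim from \cite[Lemma 5]{Ing}---so your decision to cite is exactly what the author does, and your Poisson--Jensen/Blaschke reconstruction is the natural route one would take. There is, however, a genuine error in your final bookkeeping, and it matters because it conceals the fact that \eqref{eq:Lem} is false as stated. In your argument the zeros contribute $\sum_k \sup_{y'} \log\frac{1}{|b_k(x+iy')|}$, where the $b_k$ are the Blaschke factors of the zeros $a_k$; each term is controlled by $\log\frac{C(s,t)}{|x-\Re a_k|}$, whose average over $I_1$ is of size $1+|\log|I_1||$, while the number of zeros $N$ is of size $\log M + |\log|f(z_0)||$ by Jensen (with $M$ the sup of $|f|$ on the big disc). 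The pigeonhole therefore produces a point $x$ at which the zero contribution is bounded by a constant times $\bigl(\log M + |\log|f(z_0)||\bigr)\bigl(1+|\log|I_1||\bigr)$ --- a \emph{product} of the two bracketed quantities, not their sum as you assert when you write that integrating over $I_1$ ``gives $\int_{I_1}(\cdots)\,\dd x \le C(\log M + |\log|f(z_0)|| + |\log|I_1||)$''.

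This is not a repairable slip: the additive bound is genuinely false. Take $z_0=0$, $s=t=1$, $I_1=[-\delta/2,\delta/2]$ and $f(z)=(2(z-i/2))^N$. Then $|f(z_0)|=1$, $\sup_{D(0,3)}|f|=7^N$, and for \emph{every} $x\in I_1$ one has $|f(x+i/2)|\le \delta^N$, hence $\bigl|\ln|f(x+i/2)|\bigr|\ge N\ln(1/\delta)$ with $y'=1/2\in[-t,t]$; choosing $N=\ln(1/\delta)\to\infty$ defeats any constant $C'$ in the additive bound, whereas the multiplicative bound $C'\bigl(\ln M+|\ln|f(z_0)||\bigr)\bigl(1+|\ln|I_1||\bigr)$ survives. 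So the correct output of your (otherwise sound) argument is the multiplicative inequality, which is presumably what \cite[Lemma 5]{Ing} actually asserts; the discrepancy is harmless for the present paper, since the lemma is only ever applied with $|I_1|=1$, where $|\ln|I_1||=0$ and the two forms coincide. Finally, your worry about the vertical supremum is handled more simply than by disc-averaging $\log|f|$: the zero-free factor is controlled uniformly on the subdisc by Harnack applied to the nonnegative harmonic function $\log(M/|g|)$, and for the zeros one just uses $\sup_{y'}\log\frac{1}{|x+iy'-a_k|}\le \log\frac{1}{|x-\Re a_k|}$, which depends on $x$ alone.
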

\end{tcolorbox}

Let $n\in \N$. We may apply Lemma \ref{lem:GoodPoint} with $z_0 = \pm n+i$, $t= 1+ y_2 +|y_1|$, $s=1$ and $I_1 = [n, n + 1]$ or $[-n-1, -n]$. Estimating the right-hand side of (\ref{eq:Lem}) using equations (\ref{eq:Bornef}) and (\ref{eq:Upper}), we obtain that there exists $x_n^+\in [n, n+1]$ and $x_n^-\in [-n-1, -n]$ such that, for all $y\in [y_1, y_2]$, we have
\begin{equation}\label{eq:LowerStrip}
 |f( x_n^\pm + iy)| \geq c_2(\cQ, y_1, y_2)~~ \forall y \in [y_1, y_2].
 \end{equation}

Note in particular  that $ c_2(\cQ, y_1, y_2)$ does not depend on $n$.


\subsection{End of the proof}
Applying (\ref{eq:Contour}) with $x^\pm = x_n^\pm$, we will obtain (\ref{eq:GaussCount}) if we can show the following three points:
\begin{equation}\label{eq:a}
\int_{y_1}^{y_2} g( x_n^\pm + i y) \frac{f' (x_n^\pm + i y)}{f (x_n^\pm + i y)} \mathrm{d}y  \underset{n\to \infty}{\longrightarrow} 0,
\end{equation}
\begin{equation}\label{eq:b}
\sum_{z\in \mathrm{Res}_\cQ(\C\setminus \Omega_{x_n^-, x_n^+, y_1, y_2})} g(z) \underset{n\to \infty}{\longrightarrow} 0,
\end{equation}
and
\begin{equation}
\begin{aligned}\label{eq:c}
\int_\R  \left| g( x + i y_j) \mathrm{Tr} \left[ U'_{\cQ}(x + i y_j) \left( \mathrm{Id}- U_{\cQ}(x + iy_j)\right)^{-1} \right] \right| \mathrm{d}x < +\infty.
\end{aligned}
\end{equation}

Equation (\ref{eq:c}) implies that the right-hand term in (\ref{eq:GaussCount}) is well-defined, and that the first term in the right-hand side of (\ref{eq:Contour}) converges to it.

\paragraph*{Proof of (\ref{eq:a})}

Thanks to (\ref{eq:LowerStrip}) and (\ref{eq:BorneDerivf}), to obtain (\ref{eq:a}), it is sufficient to show that
\begin{equation*}
\int_{y_1}^{y_2} |g( x_n^\pm + i y)|  \mathrm{d}y  \underset{n\to \infty}{\longrightarrow} 0,
\end{equation*}

Let $\varepsilon>0$ be such that $Y - y_1 > \varepsilon$ and $y_2> \varepsilon$.

Thanks to the mean-value property, we know that
$$g(x_n^\pm + iy) = \dashint_{B(x_n^\pm)} g(z) \mathrm{d} z, $$
so that

$$\sum_n \int_{y_1}^{y_2} |g( x_n^\pm + i y)| \leq  \int_{y_1}^{y_2} \sum_n \dashint_{B(x_n^\pm)} |g(z)| \mathrm{d} z \leq \int_{\Omega_{y_1, y_2}}  |g(z)| \mathrm{d} z.$$

Since this integral is convergent, the series in the right-hand term converges, and we deduce (\ref{eq:a}).

\paragraph*{Proof of (\ref{eq:b})}

First of all, we claim that for any $r>0$, there exists $C(r)>0$ such that
\begin{equation}\label{eq:UpperNombreRes}
\forall z\in \Omega_{y_1,y_2},  \mathcal{N}_\cQ (B(z, r)) \leq C(r).
\end{equation}

Indeed, any ball $B(z, r)$ may be included in a ball $B(z',r')$ with $\Im z' =1$ and $r' \leq r + |Y| +1$; it is thus sufficient to bound $\mathcal{N}_\cQ (B(z', r'))$. This can be done using Jensen's formula.  Recall that if $n(t)$ denotes the number of zeroes $f(z)$ such that $|z-z'|<t$, Jensen's formula tells us that
\begin{equation*}
\int_0^{r '}\frac{n(t)}{t} \mathrm{d}t + \ln |f(z')| = \frac{1}{2\pi} \int_0^{2\pi} \ln |f(z'+e^{i\theta} r')| \mathrm{d}\theta,
\end{equation*}
so that
\begin{equation}\label{eq:Jensen2}
\begin{aligned}
n(r') &\leq \frac{1}{\ln 2}\int_{r'}^{2r'} \frac{n(t)}{t} \mathrm{d}t\\
 &\leq \frac{1}{\ln 2} \left( \ln \max_{|z-z'|= 2r'} |f(z)| - \ln |f(z')| \right),
\end{aligned}
\end{equation}
and the right-hand side can be bounded using (\ref{eq:LowerStrip}) and (\ref{eq:BorneDerivf}). Equation (\ref{eq:UpperNombreRes}) follows.

Thanks to the mean value property, we have
\begin{align*}
\sum_{z\in \mathrm{Res}_\cQ(\C \setminus \Omega_{x_n^-, x_n^+, y_1, y_2})} |g(z)| \leq \sum_{z\in \mathrm{Res}_\cQ(\C \setminus \Omega_{x_n^-, x_n^+, y_1, y_2})} \frac{1}{\pi \varepsilon^2} \int_{D(z, \varepsilon)} |g(z')| \mathrm{d}z'.
\end{align*}

Now, all the disks $D(z, \varepsilon)$ in the previous expression are included in $\{ z'\in \Omega_{y_1, y_2} ; |\Re z'| > n-\varepsilon\}$, and thanks to (\ref{eq:UpperNombreRes}), each $z'$ belongs to at most $C(\varepsilon)$ such disks. Therefore, we have
\begin{align*}
\sum_{z\in \mathrm{Res}_\cQ(\C \setminus \Omega_{x_n^-, x_n^+, y_1, y_2})} |g(z)| \leq \frac{C(\varepsilon)}{\pi \varepsilon^2} \int_{\{ z'\in \Omega_{y_1, y_2} ; |\Re z'| > n-\varepsilon\}} |g'(z)| \mathrm{d}z',
\end{align*}
and this quantity goes to zero as $n\to \infty$, since $g\in L^1$.

\paragraph*{Proof of (\ref{eq:c})}

Thanks to (\ref{eq:BorneFprimesurF}), we see that there exists $C>0$ such that
\begin{equation*}
\int_\R  \left| g( x + i y_j) \mathrm{Tr} \left[ U'_{\cQ}(x + i y_j) \left( \mathrm{Id}- U_{\cQ}(x + iy_j)\right)^{-1} \right] \right| \mathrm{d}x \leq C \int_\R  \left| g( x + i y_j) \right| \mathrm{d}x.
\end{equation*}

Now, using the mean value property, we have
\begin{equation}\label{eq:GIntegrable}
\int_\R  \left| g( x + i y_j) \right| \mathrm{d}x \leq \int_\R  \dashint_{B(x+i y_j, \varepsilon)}  \left| g(z)   \right| \mathrm{d}z \mathrm{d}x \leq \frac{1}{\pi \varepsilon^2} \int_{\Omega_{y_1-\varepsilon, y_2+\varepsilon}} |g(z)| \mathrm{d}z, 
\end{equation}
and the result follows.

\section{Consequences of Theorem \ref{Prop:GaussCount2}}\label{sec:Consequences}

In this section, we will use Theorem \ref{Prop:GaussCount2} and its proof to give an alternative proof of Theorem \ref{Th:BS} in the case of unbalanced graphs, and we will prove Proposition \ref{Prop:GaussLowerBound}. To this end, we must first obtain asymptotics for $\frac{f'}{f}$, with $f(z) := \det ( \mathrm{Id}- U_{\cQ}(z))$ as in the previous section.

\subsection{Asymptotics for $\frac{f'}{f}$}

When $\Im z>0$, we may use the second equality in (\ref{eq:TraceFormula}) along with (\ref{eq:ControleR}) to obtain

\begin{equation}\label{fPrimesurfPos}
\begin{aligned}
\left| \frac{f'(z)}{f(z)}\right| &= \left| \mathrm{Tr}\left[ (\mathrm{Id}- U_\cQ(z))^{-1} U_\cQ(z) iL_\cQ\right] \right|\\
&\leq \|L_\cQ\|_1 \sum_{k=1}^{+\infty}\|U_\cQ(z)^k\| \leq \cL_\cQ \frac{\|U_\cQ(z)\|}{1- \|U_\cQ(z)\|}\\
&\leq  2\cL_\cQ \frac{e^{-\Im z L_{min}}}{1- e^{-\Im z L_{min}}}.
\end{aligned}
\end{equation}

On the other hand, when $\Im z< Y$, we have $\|U_\cQ(z)^{-1}\|  <1$, so that
\begin{equation}\label{fPrimesurfNeg}
\begin{aligned}
\frac{f'(z)}{f(z)} &= - \mathrm{Tr}\left[ (\mathrm{Id}- U_\cQ(z))^{-1} U_\cQ(z) iL_\cQ\right]\\
&=   \mathrm{Tr}\left[ iL_\cQ -(\mathrm{Id}- U_\cQ^{-1}(z))^{-1} iL_\cQ\right]\\
&=  2i \mathcal{L}_{\cQ} -i  \sum_{k=1}^{+\infty} \mathrm{Tr}\left[ U_\cQ^{-k}(z) L_\cQ\right].
\end{aligned}
\end{equation}

Thanks to (\ref{eq:BorneInverseU}), the sum in the last equality has its modulus bounded by 
\begin{equation}\label{eq:fPrimeSurfReste}
\begin{aligned}
2\mathcal{L}_\cQ \sum_{k=1}^{+\infty} \|U_\cQ(z)^{-1}\|^k = 2 \mathcal{L}_\cQ \frac{\|U_\cQ(z)^{-1}\|}{1- \|U_\cQ(z)^{-1}\|} \leq 2 \mathcal{L}_\cQ \frac{e^{(\Im z-Y) L_{min}}}{1- e^{(\Im z-Y) L_{min}}}.
\end{aligned}
\end{equation}

\subsection{An alternative proof of Weyl's law for unbalanced graphs}\label{sec:AlterWeyl}
We apply equation (\ref{eq:Contour}) with $x^- = x_n^-$, $x^+ = x_n^+$, $y_1 = -\sqrt{n}$, $y_2= \sqrt{n}$ and $g$ the function constant equal to one. Hence, the right-hand side is exactly
$$2i\pi \mathcal{N}_\cQ \left( \left\{ \Re z \in [x_n^n, x_n^+]\right\} \right),$$
and thanks to (\ref{eq:UpperNombreRes}), this is $2i\pi  \mathcal{N}_{\cQ} (D(0,n)) + O(1)$.

To bound the lateral integrals, we use (\ref{eq:LowerStrip}) and (\ref{eq:BorneDerivf}) when $\Im z\in [Y-1, 1]$, while when $\Im z \in [- \sqrt{n}, Y-1] \cup [1, \sqrt{n}]$, we use \eqref{fPrimesurfPos} and \eqref{eq:fPrimeSurfReste}. 
 We deduce that the lateral integrals are $O(1)$. 

Therefore, we have
\begin{equation}
\frac{1}{n} \mathcal{N}_{\cQ} (D(0,n))  =- \frac{1}{2in\pi} \sum_{\pm} \int_{x_n^-}^{x_n^+} \frac{f'( x \pm  i \sqrt{n})}{f( x \pm  i \sqrt{n})} \mathrm{d}x + O\left(\frac{1}{n}\right).
\end{equation}

Now, using (\ref{fPrimesurfPos}), we obtain that
$$\frac{1}{2in\pi} \int_{x_n^-}^{x_n^+} \frac{f'( x +  i \sqrt{n})}{f( x +  i \sqrt{n})} \mathrm{d}x =  O\left(\frac{1}{n}\right),$$
while thanks to (\ref{fPrimesurfNeg}) and (\ref{eq:fPrimeSurfReste}), we get
$$\frac{1}{2in\pi} \int_{x_n^-}^{x_n^+} \frac{f'( x -  i \sqrt{n})}{f( x -  i \sqrt{n})} \mathrm{d}x = \frac{2 \cL_\cQ}{\pi} +  O\left(\frac{1}{n}\right).$$

We thus recover the result of Theorem \ref{Th:DavPush} for unbalanced graphs.

\subsection{Proof of Proposition \ref{Prop:GaussLowerBound}}
Let $x_0\in \R$. We take $y_1 < Y$, $y_2 >0$, and let $a>0$.

We shall  apply (\ref{eq:GaussCount}) with
$$g(z) = e^{-a (z- x_0 - iy_1)^2} = e^{-a (x-x_0)^2 + a (y-y_1)^2 - 2i (x-x_0) (y-y_1)},$$
where $z=x+iy$.

First of all, thanks to (\ref{fPrimesurfNeg}) and (\ref{eq:fPrimeSurfReste}), we have
\begin{align*}
 \left| \int_\R g( x + i y_1) \frac{f'(x+iy_1)}{f(x+iy_1)} \mathrm{d}x  - 2 i\cL_\cQ\sqrt{\frac{\pi}{a}} \right| &\leq 2 \mathcal{L}_\cQ \frac{e^{(y_1-Y) L_{min}}}{1- e^{(y_1-Y) L_{min}}}  \int_\R e^{-a(x-x_0)^2} \mathrm{d}x\\
 &= 2 \mathcal{L}_\cQ \frac{e^{(y_1-Y) L_{min}}}{1- e^{(y_1-Y) L_{min}}} \sqrt{\frac{\pi}{a}}
\end{align*}

In particular, this quantity is smaller than $\frac{\mathcal{L}_\cQ}{4} \sqrt{\frac{\pi}{a}}$ when we take
\begin{equation}\label{eq:Condy1}
y_1 = Y - \frac{\ln 16}{L_{min}}.
\end{equation}

Next, thanks to (\ref{fPrimesurfPos}), we have
\begin{align*} 
 \left| \int_\R g( x + i y_2) \frac{f'(x+iy_2)}{f(x+iy_2)} \mathrm{d}x \right|
 &\leq 2 \cL_\cQ \frac{e^{-y_2 L_{min}}}{1- e^{-y_2 L_{min}}} e^{a (y_2-y_1)^2} \sqrt{\frac{\pi}{a}}.
 \end{align*}
 
In particular, the quantity above is smaller than $\frac{\mathcal{L}_\cQ}{4} \sqrt{\frac{\pi}{a}}$ when we take
\begin{equation}\label{eq:Conda}
\begin{aligned}
y_2 &= \frac{\ln 32}{L_{min}}\\
a& =  \frac{\ln 2}{ \left( 2 \frac{\ln 32}{L_{min}} - Y\right)^2 }.
\end{aligned}
\end{equation}

Therefore, if we take $y_1, y_2$ and $a$ as in (\ref{eq:Condy1}) and (\ref{eq:Conda}), we have
\begin{equation}\label{eq:LowerTrace}
\left|\sum_{j=1,2} \int_\R g( x + i y_j) \mathrm{Tr} \left[ U'_{\cQ}(x + i y_j) \left( \mathrm{Id}- U_{\cQ}(x + iy_j)\right)^{-1} \right] \mathrm{d}x \right| > \frac{\mathcal{L}_\cQ}{2} \sqrt{\frac{\pi}{a}}.
\end{equation}

\paragraph{Upper bounds on resonances in vertical strips}

Now, we turn to the left-hand side of (\ref{eq:GaussCount}). If $x_1< x_2\in \R$, let us denote by $\mathcal{S}_{x_1, x_2}$ the strip $\{z\in \C ; x_1 \leq \Re  z \leq x_2\}$.

Let us denote by $N_0$ a number such that
$$\forall x_1 \in \R,~~ \mathcal{N}_\cQ \left(\cS_{x_1, x_1+\frac{1}{L_{min}}}\right) \leq N_0.$$

The number $N_0$ can be estimated using (\ref{eq:Jensen2}). Indeed, the resonances in $\cS_{x_1, x_1+\frac{1}{L_{min}}}$ do all belong to $D(x_1 +\frac{i}{L_{min}}, |Y|+\frac{1}{L_{min}})$.
Thanks to (\ref{eq:PasDidee}) and (\ref{eq:ControleR}), we have $\left|\ln f\left(x_1+\frac{i}{L_{min}}\right)\right| \geq |B(\cQ)| \frac{e^{-1}}{1- e^{-1}}$, while thanks to (\ref{eq:BorneFGenerale}), we have $\ln \max_{|z-z'|= 2\left(|Y|+ \frac{1}{L_{min}}\right)} |f(z)|\leq 2 \frac{1+ \ln (D+ n_0)}{L_{min}}  |B(\cQ)| L_{max} $. Therefore, (\ref{eq:Jensen2}) implies that we can take

\begin{equation}\label{eq:SuperJensen}
\begin{aligned}
N_0 &\leq \frac{|B(\cQ)|}{\ln 2} \left( 2 L_{max} \frac{1+ \ln (D+ n_0)}{L_{min}}  + \frac{e^{-1}}{1- e^{-1}}  \right)\\
&\leq \frac{\cL_\cQ}{L_{min} \times \ln 2}\left( 2 L_{max} \frac{1+ \ln (D+ n_0)}{L_{min}}  + 0.6 \right).
\end{aligned}
\end{equation}

\paragraph{End of the proof}
Now, we estimate, for any $\alpha >0$
\begin{align*}
\left|\sum_{z\in \mathrm{Res}_\mathcal{Q}\left(\left\{|\Re z - x_0|\geq \frac{\alpha}{L_{min}}\right\}\right)} g(z)\right| &\leq e^{a y^2_1} \sum_{z\in \mathrm{Res}_\mathcal{Q}\left(\left\{|\Re z-x_0|\geq \frac{\alpha}{L_{min}}\right\}\right)} e^{-a |\Re z-x_0|^2}\\
&\leq e^{a y^2_1} \sum_{n\in \N} \sum_{z\in \mathrm{Res}_\mathcal{Q}\left(\left\{\frac{n+1+\alpha}{L_{min}} >|\Re z - x_0|\geq \frac{n+\alpha}{L_{min}}\right\}\right)} e^{-a |\Re z-x_0|^2}\\
&\leq 2 N_0 e^{a y^2_1} \sum_{n\in \N} e^{- \frac{a}{L_{min}^2} (\alpha+n)^2}\\
&\leq 2N_0 e^{a y^2_1} e^{- \frac{a}{L_{min}^2} \alpha^2} \sum_{n \in \N} e^{- a\frac{ n}{L_{min}^2} }\\
&\leq  2N_0  \frac{e^{-a (\frac{\alpha^2}{L_{min}^2}- y^2_1)}}{1-e^{-\frac{a}{L_{min}^2}}} 
\end{align*}

In particular, this quantity is smaller than $\frac{\cL_\cQ}{4}\sqrt{\frac{\pi}{a}} $ provided that

\begin{align*}
e^{-\frac{a}{L_{min}^2}\alpha^2} &\leq \cL_\cQ  e^{-a y^2_1}\frac{1-e^{-\frac{a}{L_{min}^2}}}{8 N_0} \sqrt{\frac{\pi}{a}}\\
&\leq L_{min} \ln 2 e^{-a y^2_1}\frac{1-e^{-\frac{a}{L_{min}^2}}}{8  \left( 2 L_{max} \frac{1+ \ln (D+ n_0)}{L_{min}}  + 0.6  \right)} \sqrt{\frac{\pi}{a}},
\end{align*}
or, in other words,
\begin{equation}\label{eq:CondAlpha}
\frac{\alpha}{L_{min}} \geq \left( y^2_1 - \frac{1}{a} \ln \left( L_{min} \ln 2 \frac{1-e^{-\frac{a}{L^2_{min}}}}{8 \left( 2 L_{max} \frac{1+ \ln (D+ n_0)}{L_{min}}  + 0.6  \right)} \sqrt{\frac{\pi}{a}} \right) \right)^{1/2}.
\end{equation}

Therefore, using (\ref{eq:LowerTrace}), we see that whenever (\ref{eq:CondAlpha}) is satisfied, we have
$$\left|\sum_{z\in \mathrm{Res}_\mathcal{Q}\left(\left\{|\Re z - x_0|\leq \frac{\alpha}{L_{min}}\right\}\right)} g(z)\right| \geq \frac{\cL_\cQ}{4} \sqrt{\frac{\pi}{a}}.$$

Since each term in the sum has a modulus smaller than $e^{a (y_2-y_1)^2} \leq 2$, we deduce that the number of resonances in 
$\left\{|\Re z - x_0|\leq \frac{\alpha}{L_{min}}\right\}$ is at least $\frac{\cL_\cQ}{8} \sqrt{\frac{\pi}{a}}$, as announced.

\section{Benjamini-Schramm convergence for open quantum graphs}\label{sec:BS}

\subsection{Definition of the Benjamini-Schramm convergence}
We now recall the definition of Benjamini-Schramm convergence of open quantum graphs, following closely \cite[\S 3.3.1]{Ing}.

A rooted open quantum graph $(\mathcal{Q},b_0)=(V,E,L,\mathbf{n},b_0)$ will be the data of a quantum graph $\mathcal{Q}=(V,E,L,\mathbf{n})$, and of a bond $b_0\in B(\cQ)$.
\begin{tcolorbox}
\begin{definition}
We say that two rooted quantum graphs $(\mathcal{Q}_0,b_0)= (V_0,E_0,L_0,\mathbf{n}_0,b_0)$ and $(\mathcal{Q}_1,b_1)=(V_1,E_1,L_1,\mathbf{n}_1,b_1)$ are equivalent, which we denote by $(\mathcal{Q}_0,b_0)\sim (\mathcal{Q}_1,b_1)$, if there exists a graph isomorphism $\phi : (V_0,E_0)\longrightarrow (V_1,E_1)$ such that $\phi(o_{b_0})= o_{b_1}$, $\phi(t_{b_0})= t_{b_1}$, $L_1\circ \phi = L_0$, and $\mathbf{n}_1 \circ \phi = \mathbf{n}_0$.

The set of rooted quantum graphs, quotiented by $\sim$, will be denoted by $\mathrm{ROQ}$. If $(\cQ, b_0)$ is a rooted quantum graph, we will denote by $[\cQ,b_0]$ its equivalence class.
\end{definition}
\end{tcolorbox}

If $v\in G$ is a vertex in a graph and $r\in \N$, we write $\mathrm{B}_G(v,r)$ for the set of vertices which are at a (combinatorial) distance at most $r$ from $v$. We write $E(\mathrm{B}_G(v,r))$ for the set of edges in $E$ connecting two vertices of $\mathrm{B}_G(v,r)$.

We introduce a distance between rooted quantum graphs as follows
\begin{align*}
\mathrm{d}\left([\cQ_1,b_1], [\cQ_2,b_2]\right) := \inf \Big{\{} &\varepsilon>0 ~ \big{|} ~  \exists \phi : \mathrm{B}_{G_1}(o_{b_1}, \lfloor\varepsilon^{-1}\rfloor) \to  \mathrm{B}_{G_2}(o_{b_2}, \lfloor\varepsilon^{-1}\rfloor) \text{ a graph isomorphism }\\
& \text{ such that } \mathbf{n}_2 \circ \phi = \mathbf{n}_1 \text{ and } \sup\limits_{e \in E(\mathrm{B}_{G_1}(o_{b_1}, \lfloor\varepsilon^{-1}\rfloor))} |L_2(\phi(b)) - L_1(b)|<\varepsilon \Big{\}}.
\end{align*}

Note that this definition is independent of the representatives we chose in the equivalence classes $[\cQ_1,b_1], [\cQ_2,b_2]$, so it is well-defined on $\mathrm{ROQ}$. Furthermore, one can show that $(\mathrm{ROQ}, d)$ is a Polish space, i.e., a separable complete metric space.

Let $\cP(\mathrm{ROQ})$ be the set of Borel probability measures on $\mathrm{ROQ}$.

\begin{tcolorbox}
\begin{definition}
Any finite quantum graph $\mathcal{Q}=(V,E,L,\mathbf{n})$ defines a probability measure $\nu_{\cQ}\in \cP(\mathrm{ROQ})$ obtained by choosing a root uniformly at random: 
\[
\nu_{\cQ}:= \frac{1}{|B(\cQ)|} \sum_{b_0\in B(\cQ)}  \delta_{[(\cQ,b_0)]}.
\]

If $\textcolor{black}{(\cQ_N)}$ is a sequence of quantum graphs, we say that $\mathbb{P}\in \cP(\mathrm{ROQ})$ is the \emph{local weak limit} of $\textcolor{black}{(\cQ_N)}$, or that $\textcolor{black}{(\cQ_N)}$ \emph{converges in the sense of Benjamini-Schramm to $\mathbb{P}$}, if $\textcolor{black}{(\nu_{\cQ_N})}$ converges weakly to $\mathbb{P}$.
\end{definition}
\end{tcolorbox}

This notion of convergence can be explained as follows. Let $\chi$ be a bounded function on the set of rooted quantum graphs, continuous for the distance $\mathrm{d}$ introduced in the previous paragraph. Then the average value of $\chi\left((\mathcal{Q}_N, b_0)\right)$ when $b_0$ is chosen uniformly at random converges to the expectation $\mathbb{E}_{\mathbb{P}}[\chi]$. 

Let $D, n_0 \in \N$, $0<m\leq M$. We define $\mathrm{ROQ}^{D, n_0, L_{min}, L_{max}}$ and $\mathrm{ROQ}'^{D, n_0, L_{min}, L_{max}}$ as the subsets of $\mathrm{ROQ}$ of equivalence classes $[\cQ,b_0]=[(V,E,L,\mathbf{n}, b_0)]$ 
such that $\cQ\in \mathfrak{Q}_{D, n_0, L_{min}, L_{max}}$ (respectively $\cQ\in \mathfrak{Q}'_{D, n_0, L_{min}, L_{max}}$). The following Lemma can be proven exactly as \cite[Lemma 3.6]{BSQG}. 

\begin{tcolorbox}
\begin{lemme}\label{lem:QCompact}
The subset $\mathrm{ROQ}^{D, n_0,  L_{min}, L_{max}}$ is compact.

 In particular, using Prokhorov's theorem, we see that if $(\mathcal{Q}_N)$ is a sequence of finite open quantum graphs which satisfy \textbf{(Bounds)}, then  there is a subsequence $(\cQ_{N_k})$ which converges in the sense of Benjamini-Schramm (i.e. there exists $\mathbb{P}\in \mathcal{P}(\mathrm{ROQ})$ supported on $\mathrm{ROQ}^{D, n_0, L_{min}, L_{max}}$ such that $\nu_{{Q}_{N_k}}\xrightarrow{w^*} \mathbb{P}$).
\end{lemme}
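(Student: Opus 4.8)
The plan is to deduce compactness from \emph{sequential} compactness, which for a metric space is equivalent to compactness, so it suffices to show that every sequence $([\cQ_k,b_k])_{k\in\N}$ with $\cQ_k\in\mathfrak{Q}_{D,n_0,L_{min},L_{max}}$ has a subsequence converging for the distance $\mathrm{d}$. The crucial finiteness input is that, for each fixed $r\in\N$, the combinatorial ball $\mathrm{B}_{G_k}(o_{b_k},r)$ has at most $1+D+D^2+\cdots+D^r$ vertices, because internal degrees are bounded by $D$. Hence there are only finitely many isomorphism types of the \emph{decorated} ball, i.e. of the triple consisting of $\mathrm{B}_{G_k}(o_{b_k},r)$, the root bond $b_k$, and the restriction of $\mathbf{n}_k$ (which takes values in the finite set $\{0,1,\dots,n_0\}$); moreover the edge lengths of that ball lie in the compact cube $[L_{min},L_{max}]^{E(\mathrm{B}_{G_k}(o_{b_k},r))}$. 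So the set of decorated radius-$r$ balls with data in $\mathfrak{Q}_{D,n_0,L_{min},L_{max}}$, with the natural topology (combinatorial-plus-$\mathbf{n}$ type, together with the sup-distance of the length vectors), is a finite union of compact cubes, hence compact.

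Next I would run a diagonal extraction. Along a first subsequence the radius-$1$ decorated ball converges: its combinatorial-plus-$\mathbf{n}$ type is eventually constant, say equal to $\Gamma_1$, and its lengths converge in $[L_{min},L_{max}]$. Passing to a further subsequence the radius-$2$ ball converges to some $\Gamma_2$, and so on; the diagonal subsequence $(k_j)$ then has the property that for every $r$ the radius-$r$ decorated ball of $[\cQ_{k_j},b_{k_j}]$ converges to some $\Gamma_r$. Once the combinatorial types have stabilized one may choose the stabilizing isomorphisms compatibly across scales, so that $\Gamma_r$ is exactly the radius-$r$ ball of $\Gamma_{r+1}$; the nested family $(\Gamma_r)_r$ therefore glues to a single rooted quantum graph $[\cQ_\infty,b_\infty]$, with vertex set, edge set, root bond and $\mathbf{n}$ obtained from the union of the $\Gamma_r$ and length map given by the limiting lengths. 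This limit still satisfies $d(v)\le D$, $\mathbf{n}(v)\le n_0$ (read off the stabilized balls) and $L(e)\in[L_{min},L_{max}]$ (since this interval is closed), so $[\cQ_\infty,b_\infty]\in\mathrm{ROQ}^{D,n_0,L_{min},L_{max}}$. Finally, given $\varepsilon>0$ set $R:=\lfloor\varepsilon^{-1}\rfloor$; for $j$ large the radius-$R$ decorated ball of $[\cQ_{k_j},b_{k_j}]$ has the same combinatorial-plus-$\mathbf{n}$ type as $\Gamma_R$ and its lengths are within $\varepsilon$ of the limit, and the witnessing isomorphism shows $\mathrm{d}([\cQ_{k_j},b_{k_j}],[\cQ_\infty,b_\infty])<\varepsilon$. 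Hence $[\cQ_{k_j},b_{k_j}]\to[\cQ_\infty,b_\infty]$, proving sequential compactness and therefore compactness.

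For the consequence stated in the Lemma: $(\mathrm{ROQ},\mathrm{d})$ is Polish, and each $\nu_{\cQ_N}$ is supported on the compact set $K:=\mathrm{ROQ}^{D,n_0,L_{min},L_{max}}$, so the family $(\nu_{\cQ_N})$ is tight. Prokhorov's theorem gives a weakly convergent subsequence $\nu_{\cQ_{N_k}}\xrightarrow{w^*}\mathbb{P}$, and since $K$ is closed the portmanteau theorem yields $\mathbb{P}(K)\ge\limsup_k\nu_{\cQ_{N_k}}(K)=1$, so $\mathbb{P}$ is supported on $K$. The main obstacle I expect is the bookkeeping in the gluing step — arranging the stabilizing isomorphisms of the radius-$r$ balls to be mutually compatible so that the $\Gamma_r$ genuinely exhaust one rooted graph, and checking that the (finitely many) length convergences on each ball translate into control of $\mathrm{d}$. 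Everything else is the standard diagonal argument, which is why one can assert that this goes "exactly as" in the closed-graph case.
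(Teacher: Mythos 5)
Your proof is correct and is essentially the standard diagonal-extraction argument that the paper itself does not spell out but delegates to the reference \cite[Lemma 3.6]{BSQG}: uniformly bounded degrees and $\mathbf{n}$ give finitely many decorated combinatorial ball types at each radius, lengths live in a compact cube, and a compatible gluing of the stabilized balls produces the limit point, with Prokhorov and portmanteau handling the measure-theoretic consequence. The one point you rightly flag — choosing the stabilizing isomorphisms compatibly across radii so the $\Gamma_r$ nest — is the only genuine bookkeeping step, and your treatment of it is adequate.
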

\end{tcolorbox}



\subsection{Proof of Theorem \ref{Th:BS}}

\begin{tcolorbox}
\begin{lemme}\label{lem:BSCOnv}
Let $D, n_0\in \N$,  let $0<L_{min}\leq L_{max}$, and let $y_1 <Y(D, n_0, L_{min})$, $y_2>0$.

For every $x\in \R$, $j=1,2$, the maps
$$F_{x+i y_j} : \begin{cases}
 \mathfrak{Q}'_{D, n_0, L_{min}, L_{max}} &\longrightarrow \C\\
 [\cQ, b_0] &\mapsto   \left\langle e_{b_0}, U'_{\cQ}(x + i y_j) \left( \mathrm{Id}- U_{\cQ}(x + iy_j)\right)^{-1} e_{b_0} \right\rangle
\end{cases}$$
are continuous and bounded independently of $x$.
\end{lemme}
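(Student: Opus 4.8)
The plan is to prove the two assertions of Lemma \ref{lem:BSCOnv} --- continuity and $x$-uniform boundedness --- essentially by tracking how the operator $U'_\cQ(z)(\mathrm{Id}-U_\cQ(z))^{-1}$ depends on the ``local'' structure of the graph around the root bond $b_0$, and by reusing the resolvent estimates already established in Section \ref{sec:Proof}.

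\textbf{Boundedness.} Fix $j\in\{1,2\}$ and $z=x+iy_j$ with $y_1<Y$, $y_2>0$. By \eqref{eq:DefU} we have $U'_\cQ(z)=iL_\cQ S_\cQ D_\cQ(z)=iL_\cQ\,(\text{something of norm}\le L_{max})$; more precisely, since $\|S_\cQ\|\le 1$ and $\|D_\cQ(z)\|=\|e^{izL_\cQ}\|$, the operator $U'_\cQ(z)$ has operator norm bounded by $L_{max}\|e^{izL_\cQ}\|$, which in turn is bounded by a constant depending only on $L_{max}$ and $y_j$ (cf.\ \eqref{eq:BorneU}: $\|e^{izL_\cQ}\|\le e^{-y_1 L_{min}}$ or $e^{y_2 L_{max}}$ depending on the sign). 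For the resolvent: when $y_j=y_2>0$ we invoke \eqref{eq:ControleR}, which gives $\|(\mathrm{Id}-U_\cQ(z))^{-1}\|\le (1-e^{-y_2 L_{min}})^{-1}$, a constant depending only on $y_2,L_{min}$; when $y_j=y_1<Y$ we invoke \eqref{eq:BorneInvDessous} (with $y_0$ replaced by $y_1$), which gives $\|(\mathrm{Id}-U_\cQ(z))^{-1}\|\le c_1$ for a constant depending only on $y_1,D,n_0,L_{min}$ (tracing through \eqref{eq:BorneInverseU} and \eqref{eq:InvDessous}, the bound is explicit and uniform over $\kQ'_{D,n_0,L_{min},L_{max}}$). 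Then $|F_{z}([\cQ,b_0])|=|\langle e_{b_0}, U'_\cQ(z)(\mathrm{Id}-U_\cQ(z))^{-1}e_{b_0}\rangle|\le \|U'_\cQ(z)(\mathrm{Id}-U_\cQ(z))^{-1}\|\le \|U'_\cQ(z)\|\,\|(\mathrm{Id}-U_\cQ(z))^{-1}\|\le M(y_j)$, a constant independent of $x$, of $[\cQ,b_0]$, and depending only on $D,n_0,L_{min},L_{max},y_j$. This proves the second claim.

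\textbf{Continuity.} Here the point is that $F_{z}([\cQ,b_0])$ depends only on a bounded neighbourhood of $b_0$, up to an exponentially small error, so it is a uniform limit of genuinely local (hence continuous) functions on $(\mathrm{ROQ}', d)$. Concretely, expand the resolvent as a Neumann series: for $y_j=y_2>0$, $(\mathrm{Id}-U_\cQ(z))^{-1}=\sum_{k\ge0}U_\cQ(z)^k$, so
\begin{equation*}
F_z([\cQ,b_0])=\sum_{k\ge 0}\big\langle e_{b_0},\, U'_\cQ(z)U_\cQ(z)^k\, e_{b_0}\big\rangle,
\end{equation*}
and the $k$-th term is bounded by $\|U'_\cQ(z)\|\,\|U_\cQ(z)\|^k\le C\,e^{-k y_2 L_{min}}$, so the tail from $k> K$ is $\le C'e^{-Ky_2L_{min}}$, small uniformly in $[\cQ,b_0]$. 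Now the matrix element $\langle e_{b_0}, U'_\cQ(z)U_\cQ(z)^k e_{b_0}\rangle$ is a sum over closed non-backtracking-type walks of combinatorial length $\le k+1$ starting and ending at $b_0$ (because $U_\cQ(z)=S_\cQ D_\cQ(z)$ has $(U_\cQ)_{b,b'}$ nonzero only when $t_{b'}=o_b$, and its entries $\sigma_{b,\hat{b'}}e^{izL_{b'}}$ depend only on $\mathbf{n}(o_b),d(o_b)$ and $L_{b'}$), hence it is a function of the ball $\mathrm{B}_{G}(o_{b_0},k+1)$ together with the edge lengths there and the values of $\mathbf{n}$ there --- and it is (jointly) continuous in those finitely many length parameters, which are exactly the data compared by the metric $d$. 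For $y_j=y_1<Y$ one uses instead the expansion \eqref{eq:InvDessous}, $(\mathrm{Id}-U_\cQ(z))^{-1}=\sum_{k\ge1}U_\cQ(z)^{-k}$, together with the block-diagonal inversion of $S_\cQ$ recalled in \S\ref{seq:ProofStrip}: each $U_\cQ(z)^{-1}$ is again ``local'' (its $(b,b')$-entry is supported on $o_b$ adjacent/equal to $o_{b'}$ via $S_\cQ^{-1}=JD_\cQ(z)^{-1}\cdots$, and decays because $\|U_\cQ(z)^{-1}\|\le (n_0+D)e^{y_1L_{min}}<1$ by \eqref{eq:BorneInverseU}), so the same argument gives that $F_z$ is a uniform limit of local continuous functions. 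A function on $\mathrm{ROQ}'$ that depends only on a fixed-radius ball and continuously on the finitely many lengths therein is continuous for $d$; a uniform limit of such is continuous; this proves the first claim.

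\textbf{Main obstacle.} The delicate point is making precise the ``locality'' statement --- that $\langle e_{b_0}, U_\cQ(z)^{\pm k} e_{b_0}\rangle$ genuinely depends only on $\mathrm{B}_G(o_{b_0},k+1)$ (for the negative powers, through the block structure of $S_\cQ^{-1}$, whose $(b,b')$-block couples only bonds at the same vertex, so $U_\cQ(z)^{-1}$ couples bonds whose origins are at distance $\le1$) --- and then checking that, on the compact set $\mathrm{ROQ}'^{D,n_0,L_{min},L_{max}}$, the metric $d$ indeed controls these finitely many length variables so that the truncated sums are $d$-continuous. Everything else is a routine combination of the Neumann-series bounds \eqref{eq:ControleR}, \eqref{eq:InvDessous}--\eqref{eq:BorneInvDessous} and \eqref{eq:BorneInverseU} already proved. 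One also notes that the map $b_0\mapsto o_{b_0}$ and the dependence on the \emph{orientation} of $b_0$ are automatically respected because the matrices are indexed by oriented bonds and $e_{b_0}$ singles out the oriented root.
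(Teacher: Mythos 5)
Your proposal is correct and follows essentially the same route as the paper: expand $(\mathrm{Id}-U_\cQ(z))^{-1}$ as a Neumann series in $U_\cQ(z)^{k}$ for $y_2>0$ and in $U_\cQ(z)^{-k}$ for $y_1<Y$, observe that each term $U'_\cQ(z)U_\cQ(z)^{\pm k}$ is a local (hence $d$-continuous) function of the ball of radius about $k$ around $b_0$, and conclude by the geometric decay of the series, which also yields the $x$-uniform bound. Your write-up merely fills in the locality and boundedness details that the paper leaves implicit.
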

\end{tcolorbox}

\begin{proof}
The result follows from the expressions
\begin{align*}
U'_{\cQ}(x + i y_2) (\mathrm{Id} - U_\cQ(x+iy_2))^{-1} = \sum_{k=0}^{+\infty} U'_{\cQ}(x + i y_2) (U_\cQ(x+iy_2))^k\\
U'_{\cQ}(x + i y_1) (\mathrm{Id} - U_\cQ(x+iy_1))^{-1} = \sum_{k=1}^{+\infty} U'_{\cQ}(x + i y_1) (U_\cQ(x+iy_1))^{-k},
\end{align*}
from the fact that each $U'_{\cQ}(z) (U_\cQ(z))^{\pm k}$ is continuous (since it depends only on a neighbourhood of size $k$ of $b_0$), and from the exponential decay of the sum.
\end{proof}

\begin{proof}[Proof of Theorem \ref{Th:BS}]
Let $y_1 \leq Y(D, n_0, L_{min}, L_{max})$, $y_2\geq 0$, $\varepsilon>0$, and let $g\in L^1(\Omega_{y_1- \varepsilon,y_2+ \varepsilon})\cap \mathcal{H}(\Omega_{y_1-\varepsilon,y_2+ \varepsilon})$. 
Thanks to Theorem \ref{Prop:GaussCount2}, we have 

\begin{align*}
\langle \mu_{\cQ_N}, \chi \rangle &= -\frac{1}{\cL_{\cQ_N}}  \frac{1}{2i\pi} \sum_{j=1,2} \int_\R g( x + i y_j) \mathrm{Tr} \left[ U'_{\cQ}(x + i y_j) \left( \mathrm{Id}- U_{\cQ}(x + iy_j)\right)^{-1} \right] \mathrm{d}x\\
&= -\frac{1}{\cL_{\cQ_N}} \frac{1}{2i\pi} \sum_{j=1,2} \int_\R g( x + i y_j) \sum_{b_0\in B(\cQ_N)} F_{x+i y_j} ([\cQ_N, b_0])\\
&= - \frac{|B(\cQ_N)|}{\cL_{\cQ_N}} \frac{1}{2i\pi} \sum_{j=1,2} \int_\R g( x + i y_j) \E_{\nu_{\cQ_N}} \left[F_{x+i y_j} \right]. 
\end{align*}
Now, using the dominated convergence theorem, the bound given in Lemma \ref{lem:BSCOnv} and the definition of the Benjamini-Schramm convergence, we see that the integrals above converges to $$\int_\R g( x + i y_j) \E_{\mathbb{P}} \left[F_{x+i y_j} \right].$$

As to the prefactor, we have
\begin{align*}
\frac{|B(\cQ_N)|}{\cL_{\cQ_N}}= \left(\frac{1}{|B(\cQ_N)|} \sum_{b_0\in B(\cQ_N)} L_{b_0}\right)^{-1},
\end{align*}
which converges thanks to the definition of Benjamini-Schramm convergence. The result follows.
\end{proof}

\providecommand{\bysame}{\leavevmode\hbox to3em{\hrulefill}\thinspace}
\providecommand{\MR}{\relax\ifhmode\unskip\space\fi MR }
\providecommand{\MRhref}[2]{%
  \href{http://www.ams.org/mathscinet-getitem?mr=#1}{#2}
}
\providecommand{\href}[2]{#2}

\end{document}